\pgfplotsset{compat=newest} 
\pgfplotsset{compat=newest}
\definecolor{navyblue}{rgb}{0.0, 0.0, 0.5}
\newcommand{\abs}[1]{\left|#1\right|}
\newcommand{\norm}[1]{\left\|#1\right\|}
\newcommand{\bN}{\boldsymbol{N}}
\date{}
\newtheorem{theorem}{Theorem}
\newtheorem{lemma}[theorem]{Lemma}
\newtheorem{remark}[theorem]{Remark}
\theoremstyle{definition}
\newcommand{\<}{\langle{}}
\renewcommand{\>}{\rangle}
\newcommand{\wtilde}{\widetilde}
\newcommand{\R}{\ensuremath{\mathbb{R}}}
\newcommand{\ip}[2]{\llangle#1\hspace*{.5mm},#2\rrangle}
\newcommand{\dual}[2]{\<#1\hspace*{.5mm},#2\>}
\newcommand{\vdual}[2]{(#1\hspace*{.5mm},#2)}
\newcommand{\set}[2]{\left\{#1;\; #2\right\}}
\newcommand{\bn}{{\boldsymbol{n}}}
\newcommand{\bt}{{\boldsymbol{t}}}
\newcommand{\pit}{{\boldsymbol{\pi}_t}}
\DeclareMathOperator{\diam}{diam}
\DeclareMathOperator{\sym}{sym}
\DeclareMathOperator*{\argmin}{arg min}
\let\hom\undefined
\def\hom{\mathrm{hom}}
\newcommand{\Grad}{\boldsymbol{\nabla}}
\newcommand{\grad}{\nabla}
\newcommand{\Gradgrad}{\Grad\grad}
\newcommand{\Ggrad}{\mathrm{Ggrad}}
\newcommand{\dDiv}{\mathrm{dDiv}}
\let\div\undefined
\DeclareMathOperator{\div}{div}
\DeclareMathOperator{\Div}{Div}
\DeclareMathOperator{\divDiv}{divDiv}
\DeclareMathOperator{\nDiv}{nDiv_{eff}}
\newcommand{\bx}{\boldsymbol{x}}
\newcommand{\bnu}{\boldsymbol{\nu}}
\newcommand{\bphi}{\boldsymbol{\phi}}
\newcommand{\bpsi}{\boldsymbol{\psi}}
\newcommand{\bT}{\boldsymbol{T}}
\newcommand{\bA}{\boldsymbol{A}}
\newcommand{\bM}{\boldsymbol{M}}
\newcommand{\bQ}{\boldsymbol{Q}}
\newcommand{\bG}{\boldsymbol{G}}
\newcommand{\dbM}{\boldsymbol{\delta\!M}}
\newcommand{\dphi}{\delta\!\phi}
\newcommand{\dpsi}{\delta\!\psi}
\newcommand{\du}{\delta\!u}
\newcommand{\cE}{\mathcal{E}}
\newcommand{\cF}{\mathcal{F}}
\newcommand{\cO}{\mathcal{O}}
\let\SS\undefined
\newcommand{\SS}{\mathbb{S}}
\newcommand{\trGg}[1]{\mathrm{tr}_{#1}^{\mathrm{Ggrad}}} 
\newcommand{\trdD}[1]{\mathrm{tr}_{#1}^{\mathrm{dDiv}}} 
\newcommand{\G}[1]{{\Gamma_{\mathit{#1}}}}
\newcommand{\JD}{{J_D}}
\newcommand{\JN}{{J_N}}
\newcommand{\mesh}{\mathcal{T}}
\newcommand{\el}{T}
\newcommand{\RT}{\mathcal{RT}}
\newcommand{\X}{\mathbb{X}}
\title{Tensor finite elements for smectic liquid crystals%
\thanks{Supported by ANID-Chile through FONDECYT projects 1230013, 1250070}}
\author{
Thomas~F\"uhrer$^\dagger$
\and
Norbert Heuer\thanks{
Facultad de Matem\'aticas, Pontificia Universidad Cat\'olica de Chile,
Avenida Vicu\~na Mackenna 4860, Santiago, Chile,
email: {\tt \{thfuhrer,nheuer\}@uc.cl}}
\and
Torsten Lin\ss\thanks{
Fakult\"at f\"ur Mathematik und Informatik,
FernUniversit\"at in Hagen, Universit\"atsstra{\ss}e 11, 58097 Hagen, Germany,
email: {\tt torsten.linss@fernuni-hagen.de}}}
\begin{document}
\maketitle
\begin{abstract}

We present a tensor-based finite element scheme for a smectic-A liquid crystal model.
We propose a simple C\'ea-type finite element projection in the linear case
and prove its quasi-optimal convergence. Special emphasis is put on the formulation
and treatment of appropriate boundary conditions.
For the nonlinear case we present a formulation in two space dimensions and
prove the existence of a solution. We propose a discretization that extends
the linear case in Uzawa-fashion to the nonlinear case
by an additional Poisson solver. Numerical results illustrate the performance
and convergence of our schemes.

\medskip
\noindent
{\em Key words}: liquid crystal, smectic-A model, $\div\Div$-conforming tensors,
                 semilinear problem, finite elements, Uzawa algorithm

\medskip
\noindent
{\em AMS Subject Classification}:
65N30, 
76A15,  
49J10,  
35B45,  
65N12  
\end{abstract}

\section{Introduction} \label{sec_intro}

Liquid crystals are materials that have properties of isotropic liquids and
anisotropic crystals. The crystalline structure imposes a certain orientation and periodicity.
For an overview we refer to the classical text \cite{DeGennesP_93},
see also \cite{WangZZ_21_MCL} for a recent treatise that includes numerical methods.
Models distinguish between specific phases.
Nematic liquid crystals tend to align along a global direction given by a director field.
Their numerical analysis is relatively well understood for different model variants, see, e.g.,
\cite{AdlerAEML_15_EMF,BorthagarayNW_20_SPF,GuillenGGS_13_LMF,LiuW_00_ALC,LiuW_02_MMA,MaityMN_21_DGF,MaityMN_23_PPE,NochettoRY_22_GCP}.

In this paper we present and analyze finite element approximations for a smectic-A
liquid crystal model.
The smectic phase is one that allows for layers with director fields locally
aligned within layers. The smectic-A phase refers to director fields that are locally
perpendicular to the layers. Numerical analysis for smectic liquid crystals is scarce.
Guill\'en-Gonz\'alez and Tierra \cite{GuillenGT_15_ASA} analyzed
a finite difference/finite element approach for such a model. See also
Chen, Yang and Zhang \cite{ChenYZ_17_SOL} for subsequent adaptations.
Different finite element approaches have been recently studied by
Xia and Farrell \cite{XiaF_23_VNA} and
Farrell, Hamdan, and MacLachlan \cite{FarrellHM_FED}.
Their model is related to what we study here.

We consider a model for smectic-A liquid crystals proposed
by Pevnyi, Selinger, and Sluckin \cite{PevnyiSS_14_MSL}, in the following referred to
as PSS-model. Its solution minimizes
an energy of the form
\begin{align*}
   \int_\Omega f_s(u)
   + \frac B2 \int_\Omega \abs{\Grad\grad u + q^2\bnu\bnu^\top u}^2
   + \int_\Omega f_n(\bnu)
\end{align*}
with respective bulk and nematic energy densities
\[
   f_s(u)= \frac {m_1}2 u^2 + \frac {m_2}3 u^3 + \frac {m_3}4 u^4
   \quad\text{and}\quad
   f_n(\bnu) = \frac K2 |\Grad\bnu|^2.
\]
Here, $u$ is the density variation, $\bnu$ is the unit-length director field,
and $m_1$, $m_2$, $m_3$, $B$, $q$, $K$ are real constants.
Number $K$ is also referred to as the Frank constant and $q$ is the smectic wave number.
The density tends to form periodic structures with wavelength $2\pi/q$.

The PSS-model is motivated by several numerical difficulties caused by other models,
see the discussion in \cite{PevnyiSS_14_MSL}.
The smectic tensor $\bM:=\Grad\grad u+q^2\bnu\bnu^\top u$ represents a fourth-order term
that couples the bulk and nematic energies with scaling provided by constant $B$.
Though, the very appearance of the smectic tensor complicates the
use of finite elements to approximate $u\in H^2(\Omega)$.
Such regularity requires elements with high polynomial degrees like the Argyris-element,
or HCT-elements with a macro structure that are difficult to implement.
Alternatively, canonical strategies to avoid $H^2$-regular elements are, e.g.,
discontinuous Galerkin and interior penalty methods, mixed methods,
or other non-conforming approaches. The recent paper by Farrell, Hamdan, and MacLachlan
\cite{FarrellHM_FED} investigates such strategies for the linear case
with given director field $\bnu$ and only quadratic bulk energy density.
They particularly consider different types of boundary conditions, often only treated incidentally.

In this paper we re-consider the reduced PSS-model with quadratic bulk energy density,
and study both, the linear case with given director field and the nonlinear case
in two dimensions with unknown director field. We represent the director field by its
angle and ensure in this way the unit-length constraint.
This strategy is also applicable in three space dimensions.
Though it renders more complicated in that case because the
then spherical representation of the director field turns nonlinear.
Our underlying approximation is based on a conforming finite element approximation
of the smectic tensor. Its energy space is $H(\dDiv,\Omega;\SS)$ that consists of
symmetric $L_2$-regular tensors $\bM$ whose iterated divergence $\div\Div\bM$ is $L_2$-regular
(details are given below).
In principle this leads to a mixed finite element method for a fourth-order problem,
and all the known tools can be applied. Though, in the linear case $u$ can be
approximated as a postprocessing step once an approximation of $\bM$ is known.
In the nonlinear case we propose an Uzawa-type algorithm that also separates the
problem for $u$. In both cases our algorithm is a simple C\'ea finite element projection
of the smectic tensor onto a conforming subspace of $H(\dDiv,\Omega;\SS)$,
combined with a Poisson solver for the director field in the nonlinear case.
Our discretizations therefore lead to simple systems with symmetric positive definite matrices.
Additionally, our formulation is entirely based on energy spaces.
Consequently, appearing trace operators give rise to uniformly well-posed boundary terms
and thus reveal boundary conditions that are appropriate for the model problem under consideration.
They divert slightly but critically from the ones analyzed in \cite{FarrellHM_FED}.
This becomes particularly relevant in the case of non-convex structures with
incoming corners or edges where the regularity of tensors is reduced, cf.~\cite{FuehrerHN_19_UFK}.
Whether this is relevant in liquid crystal applications is unclear.
The construction of conforming approximations of $H(\dDiv,\Omega;\SS)$-tensors
and uniformly continuous traces is based on what we have learned from
Kirchhoff--Love and Reissner--Mindlin plate bending
\cite{FuehrerH_25_MKL,FuehrerHN_19_UFK,FuehrerHS_20_UFR}.

Two important issues remain open in this paper.
The bulk energy density of the general PSS-model has cubic and quartic contributions.
Their presence complicates the treatment of the density in this paper,
which then becomes tied to the representation of tensor $\bM$.
Second, the nematic energy density of the PSS-model requires $H^1$-regularity
of the director field and thus excludes relevant cases with singularities that stem from
discontinuous changes of $\bnu$. This motivated further model adjustments, see the papers by
Ball and Bedford \cite{BallB_15_DOP} and
Xia, MacLachlan, Atherton, and Farrell \cite{XiaMAF_21_SLG},
and the numerical approach by Xia and Farrell \cite{XiaF_23_VNA}.

An overview of the remainder is as follows. In the following section we present
the linear model problem in detail and formulate possible boundary conditions.
Section~\ref{sec_spaces} introduces energy spaces of the model problem and
formulates trace operators that are required for the well-posed representation of
boundary conditions. In Section~\ref{sec_VF} we develop a variational formulation
of the linear model problem and prove its well-posedness (Theorem~\ref{thm_VF}).
Section~\ref{sec_nonl} is devoted to the formulation of the nonlinear model problem
and concludes with the existence of a solution (Theorem~\ref{thm_VF_nonl}).
Discretizations of both problems are presented in Section~\ref{sec_disc}.
We show convergence and appropriate convergence orders for the linear case
(Theorem~\ref{thm_disc_lin}) and present an Uzawa-algorithm for a solution
of the discretized nonlinear problem in \S\ref{sec_disc_nlin}.
Several numerical experiments in two dimensions are reported in Section~\ref{sec_numeric}.
They confirm expected convergence properties for the linear and nonlinear models,
and also illustrate applicability of the method for director fields with jumps.

\section{Linear model problem} \label{sec_model}

We consider a simply-connected, bounded, polygonal/polyhedral Lipschitz domain $\Omega\subset\R^d$
($d=2,3$) and aim at minimizing the energy
\begin{align} \label{energy}
   E(u)= \frac B2 \int_\Omega \abs{\Grad\grad u + q^2\bT u}^2
                           + \frac m2 \int_\Omega u^2 - L(u)
\end{align}
for a certain functional $L$ and subject to appropriate boundary conditions.
Here, $\grad$ is the gradient operator, $\Grad$ is the symmetric gradient
so that $\Grad\grad u$ is Hessian of $u$, and $m,q>0$ and $0<B\le 1$ are constants,
whereas $\bT\in L_\infty(\Omega;\SS)$ with $\SS:=\{\bM\in\R^{d\times d};\; \bM=\bM^\top\}$.
A special case is $\bT=\bnu\bnu^\top$ (dyadic product)
with $\bnu\in L_2(\Omega;\R^d)$ and $|\bnu|^2=\bnu^\top\bnu=1$.
Functional $L$ contains a possible force term $\int_\Omega fu$ and may include
a boundary functional to represent non-homogeneous natural boundary conditions.
Throughout we assume that $f\in L_2(\Omega)$.

The formulation of boundary conditions requires some preparation. In the following,
$\Gamma=\partial\Omega$ is the boundary of $\Omega$.
For a sufficiently smooth tensor $\bQ$ with values in $\SS$ we define
\begin{equation} \label{pit}
   \pit(\bQ\bn):=\begin{cases}
      \bt\cdot\bQ\bn             & (d=2)\\
      \bQ\bn-(\bn\cdot\bQ\bn)\bn & (d=3)
   \end{cases}
   \quad\text{on $\Gamma$ (almost everywhere)}.
\end{equation}
In two dimensions, $\bt$ is the unit tangential vector along $\Gamma$ in positive orientation,
and in two and three dimensions, $\bn$ denotes the exterior unit normal vector along $\Gamma$.
Later, we use the same notation $\pit$, $\bn$, and $\bt$ on the boundary of elements
(edges in two dimensions and faces in three dimensions) where $\bn$ is directed towards the
exterior of the corresponding element and $\bt$ has local positive orientation.

In the following, $\Div$ is the row-wise divergence operator for tensors
(the adjoint of $-\Grad$), and $\div_\Gamma$ is the surface divergence on $\Gamma$
(adjoint of the negative surface gradient). We define
\begin{align} \label{nDiveff}
   \nDiv(\bQ):= \bigl(\bn\cdot\Div\bQ+\div_\Gamma\pit(\bQ\bn)\bigr)|_\Gamma.
\end{align}
Later we use the same notation on the boundary of elements instead of $\Gamma$.
In the mechanics of plate bending with bending tensor $\bQ$, this term is known as the
effective shear force. In two dimensions,
$\div_\Gamma\pit(\bQ\bn)=\partial_\bt(\bt\cdot\bQ\bn)$
with tangential derivative operator $\partial_\bt=\bt\cdot\grad$.

Additionally we need jumps of tangential-normal traces of tensors $\bQ$.
In two dimensions, let $\gamma\subset\Gamma$ be a vertex-singleton of the polygon
$\Gamma$ where two edges $F_1$, $F_2$ meet (it can be at an angle of $\pi$)
and, in three dimensions,
let $\gamma\subset\Gamma$ be the relatively open edge shared by two faces $F_1$, $F_2$.
In two dimensions the respective tangential vectors are $\bt_1$, $\bt_2$
and we assume that edge $F_2$ follows edge $F_1$ in positive orientation of $\Gamma$.
In three dimensions we denote by $\bn_{\partial F_1}$ and $\bn_{\partial F_2}$
the respective unit tangential vectors of the faces along their boundaries
that are exterior normals of the respective face boundary. Then
\begin{equation} \label{jump}
   [\pit(\bQ\bn)]_{\Omega,\gamma}
   :=
   \begin{cases}
     \bigl(\bt_2\cdot(\bQ\bn)|_{F_2} - \bt_1\cdot(\bQ\bn)|_{F_1}\bigr)\big|_\gamma & (d=2),\\
     \bigl(\bn_{\partial F_2}\cdot(\bQ\bn)|_{F_2}
   + \bn_{\partial F_1}\cdot(\bQ\bn)|_{F_1}\bigr)\big|_\gamma & (d=3)
   \end{cases}
\end{equation}
is the tangential-normal jump of $\bQ$ across $\gamma$.
Below, when considering discretizations, the analogous notation
$[\pit(\bQ\bn)]_{\el,\gamma}$ will be used on the boundary of elements $\el$
(as traces from within~$\el$) where $\gamma$ is a vertex-singleton of $\el$ in two dimensions
or an edge in three dimensions.

To consider general boundary conditions we decompose $\Gamma$
into non-intersecting relatively open pieces
and two sets $\JD$, $\JN$ of disjoint vertex-singletons ($d=2$)
or edges ($d=3$) which also do not intersect with the other sets,
\begin{align*}  
   \Gamma=\mathrm{closure}\bigl(\G{hc}\cup\G{ss}\cup\G{sc}\cup\G{f}
                                \cup (\bigcup_{\gamma\in J} \gamma)\bigr)
   \quad\text{with}\ J:=\JD\cup\JN.
\end{align*}
The sets $\G{hc},\G{ss},\G{sc},\G{f}$ can be empty or are unions of connected sets
of positive relative measure. For our discretization we will have to assume that
they are unions of straight lines ($d=2$) or surface triangles ($d=3$).
The set $J$ can be empty and we require that every $\gamma\in J$
has positive distance to $\G{hc}\cup\G{ss}$.

Using the terminology from mechanics, we assign boundary conditions of hard clamped
(Dirichlet), simply supported (mixed), soft clamped (mixed),
and free (Neumann) types on the respective pieces, and prescribe
point/trace values at $\gamma\in\JD$ and jump conditions at $\gamma\in \JN$.
To simplify the presentation, these conditions are represented
by corresponding extensions $g$ (scalar function)
and $\bG$ (symmetric tensor function) onto $\Omega$ of given boundary data.
With the abbreviation
\[
   \bM:=\Grad\grad u + q^2\bT u
\]
the boundary conditions read
\begin{subequations} \label{bc}
\begin{align}
   &u=g,\quad \bn\cdot\grad u=\bn\cdot\grad g &&\hspace{-5em}\text{on}\ \G{hc},\\
   &u=g,\quad \bn\cdot\bM\bn=\bn\cdot\bG\bn   &&\hspace{-5em}\text{on}\ \G{ss},\\
   &\bn\cdot\grad u=\bn\cdot\grad g,\quad \nDiv(\bM)=\nDiv(\bG)
                                               &&\hspace{-5em}\text{on}\ \G{sc},\\
   &\nDiv(\bM)=\nDiv(\bG),\quad \bn\cdot\bM\bn = \bn\cdot\bG\bn &&\hspace{-5em}\text{on}\ \G{f},\\
   &u|_\gamma=g|_\gamma\quad\forall\gamma\in\JD,\quad
   [\pit(\bM\bn)]_{\Omega,\gamma}=[\pit(\bG\bn)]_{\Omega,\gamma}\quad \forall\gamma\in\JN.
\end{align}
\end{subequations}
Such boundary conditions appear naturally when integrating by parts the energy terms.
Their well-posedness requires the use of appropriate Banach spaces and corresponding
trace operators, introduced next.

\section{Spaces and trace operators} \label{sec_spaces}

We introduce spaces and trace operators needed for a variational formulation
of our model problem. Our setup has been developed in connection with the
study of ultraweak formulations for plate problems, cf.~\cite{FuehrerHN_19_UFK}.

For $\omega\subset\Omega$ we use the standard Sobolev spaces
$L_2(\omega)$, $H^2(\omega)$, and denote the corresponding spaces of
$U$-valued functions as $L_2(\omega;U)$, $H^2(\omega;U)$ for $U\in\{\R,\R^d,\SS\}$.
We equip $\SS$ with the Frobenius inner product
\begin{gather*}
  \bM\colon\bN  := \sum_{i,j=1}^d m_{ij} n_{ij}
  \quad \bigl(\bM=(m_{ij}), \bN=(n_{ij})\bigr),
\end{gather*}
and induced (squared) norm $\abs{\bM}^2 := \bM:\bM$.
The standard $L_2(\omega,U)$ inner products (extension by duality) and norms are generically
denoted as $\vdual{\cdot}{\cdot}_\omega$ and $\|\cdot\|_\omega$.
We drop the index $\omega$ if $\omega=\Omega$. For surfaces and lines $\omega$,
the $L_2(\omega)$ duality is $\dual{\cdot}{\cdot}_\omega$.
To specify approximation orders we also need Sobolev spaces $H^s(\Omega)$
of fractional order $s\in (0,2]$ with norm $\|\cdot\|_s$. For details we refer to~\cite{Adams}.

The space $H^2(\Omega)$ is furnished with the (squared) norm
\[
   \|v\|_\Ggrad^2 := m \|v\|^2 + B \|\Gradgrad v+q^2\bT v\|^2.
\]
We define the tensor space
\[
   H(\dDiv,\Omega;\SS) := \{\bM\in L_2(\Omega;\SS);\; \divDiv\bM\in L_2(\Omega)\}
\]
with inner product
\begin{gather*}
  \ip{\bM}{\bN}_\dDiv
    := B \vdual{\bM}{\bN}
       + \frac{B^2}{m} \vdual{\divDiv\bM + q^2\bT\colon\bM}
                           {\divDiv\bN + q^2\bT\colon\bN}
\end{gather*}
and induced (squared) norm
\[
   \norm{\bM}_{\dDiv}^2
     := \ip{\bM}{\bM}_\dDiv
     =  B \norm{\bM}^2 + \frac {B^2}m \norm{\divDiv\bM + q^2\bT\colon\bM}^2.
\]
As shown in \cite{FuehrerHN_19_UFK} for canonical norms,
traces of $H(\dDiv,\Omega;\SS)$ and $H^2(\Omega)$ are in duality.
Specifically, we define the trace operators
\begin{subequations}
  \label{def:trGg+trdD}
\begin{align}
   \label{def:trGg}
   \trGg{}:\; & \left\{\begin{array}{cll}
      H^2(\Omega) & \to & H(\dDiv,\Omega;\SS)^*\\
      v & \mapsto & \dual{\trGg{}(v)}{\bM} := \vdual{\divDiv\bM}{v} - \vdual{\bM}{\Gradgrad v},
   \end{array}\right.\\
   \label{def:trdD}
   \trdD{}:\; & \left\{\begin{array}{cll}
      H(\dDiv,\Omega;\SS) & \to & H^2(\Omega)^*\\
      \bM & \mapsto & \dual{\trdD{}(\bM)}{v} := \vdual{\divDiv\bM}{v} - \vdual{\bM}{\Gradgrad v}
   \end{array}\right.
\end{align}
\end{subequations}
and spaces with homogeneous boundary conditions
\begin{align*}
   &H^2_D(\Omega):=\{v\in H^2(\Omega);\;
                     v|_{\G{hc}\cup\G{ss}}=0,\
                     \bn\cdot\grad v|_{\G{hc}\cup\G{sc}}=0,\
                     v|_\gamma=0\; \forall\gamma\in J_D\},\\
   &H_N(\dDiv,\Omega;\SS):=
   \{\bM\in H(\dDiv,\Omega;\SS);\; \dual{\trdD{}(\bM)}{v}=0\ \forall v\in H^2_D(\Omega)\}.
\end{align*}
The following lemma shows the needed duality properties.

\begin{lemma} \label{la_tr}
(i) Any $v\in H^2(\Omega)$ satisfies
\[
   v\in H^2_D(\Omega) \quad\Leftrightarrow\quad
   \dual{\trGg{}(v)}{\dbM} = 0\quad\forall \dbM\in H_N(\dDiv,\Omega;\SS).
\]
(ii)
Trace operator $\trdD{}$ is bounded as
\[
   B \dual{\trdD{}(\bM)}{v}\le \|\bM\|_\dDiv \|v\|_\Ggrad\quad
   \forall \bM\in H(\dDiv,\Omega;\SS),\ v\in H^2(\Omega).
\]
\end{lemma}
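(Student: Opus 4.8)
The plan rests on the observation that the two pairings in~\eqref{def:trGg+trdD} are one and the same bilinear form,
\[
   b(v,\bM) := \vdual{\divDiv\bM}{v} - \vdual{\bM}{\Gradgrad v},
   \qquad v\in H^2(\Omega),\ \bM\in H(\dDiv,\Omega;\SS),
\]
so that $\dual{\trGg{}(v)}{\bM}=\dual{\trdD{}(\bM)}{v}=b(v,\bM)$. With this reading, part~(ii) is simply the continuity of $b$ with respect to the two energy norms (and, by symmetry of $b$, it simultaneously certifies that $\trGg{}$ maps into $H(\dDiv,\Omega;\SS)^*$), while part~(i) is a polarity (bi-annihilator) statement for $b$ relative to the subspaces $H^2_D(\Omega)$ and $H_N(\dDiv,\Omega;\SS)$. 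I would prove~(ii) first, since it is what makes all the pairings in~(i) well defined on the energy spaces.

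For~(ii) the first step is to rewrite $b$ in terms of the shifted quantities that actually appear in the two norms. Because the Frobenius contraction is symmetric, $\bM\colon(\bT v)=v\,(\bT\colon\bM)$, so the two $q^2$-contributions cancel and
\[
   b(v,\bM)
   = \vdual{\divDiv\bM + q^2\bT\colon\bM}{v}
     - \vdual{\bM}{\Gradgrad v + q^2\bT v}.
\]
I would then bound each $L_2$-pairing by Cauchy--Schwarz and combine the two resulting products by a discrete Cauchy--Schwarz in $\R^2$, pairing the vector $\bigl(\tfrac{B}{\sqrt m}\norm{\divDiv\bM+q^2\bT\colon\bM},\,\sqrt B\,\norm{\bM}\bigr)$ against $\bigl(\sqrt m\,\norm{v},\,\sqrt B\,\norm{\Gradgrad v+q^2\bT v}\bigr)$. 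The squared Euclidean lengths of these two vectors are exactly $\norm{\bM}_\dDiv^2$ and $\norm{v}_\Ggrad^2$, so the stated inequality follows with the prefactor $B$ and with no loss in the constant. This part is a short, self-contained computation.

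For~(i) the implication ``$\Rightarrow$'' is immediate: if $v\in H^2_D(\Omega)$ and $\dbM\in H_N(\dDiv,\Omega;\SS)$, then $\dual{\trGg{}(v)}{\dbM}=b(v,\dbM)=\dual{\trdD{}(\dbM)}{v}=0$ by the very definition of $H_N$. For the converse I would integrate by parts twice, using the Green formula for the $\divDiv$ operator from~\cite{FuehrerHN_19_UFK}, to express $b(v,\bM)$ as a sum of boundary pairings: $\bn\cdot\bM\bn$ against $\bn\cdot\grad v$ and $\nDiv(\bM)$ against $v$ over the relevant pieces of $\Gamma$, together with the vertex/edge contributions $[\pit(\bM\bn)]_{\Omega,\gamma}$ paired with the point traces $v|_\gamma$. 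For $\dbM\in H_N$ the natural traces complementary to $H^2_D(\Omega)$ vanish (i.e.\ $\nDiv(\dbM)=0$ on $\G{sc}\cup\G{f}$, $\bn\cdot\dbM\bn=0$ on $\G{ss}\cup\G{f}$, and the jumps vanish at $\gamma\in\JN$), so only the pairings of the Dirichlet-type traces of $v$ --- namely $v$ on $\G{hc}\cup\G{ss}$, $\bn\cdot\grad v$ on $\G{hc}\cup\G{sc}$, and $v|_\gamma$ for $\gamma\in\JD$ --- against the remaining free traces of $\dbM$ survive. Requiring $b(v,\dbM)=0$ for every admissible $\dbM$ should then force precisely the boundary conditions defining $H^2_D(\Omega)$.

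The hard part is this converse, and specifically the surjectivity of the trace maps restricted to $H_N(\dDiv,\Omega;\SS)$: I must know that, as $\dbM$ ranges over $H_N$, its free boundary traces ($\nDiv(\dbM)$ on $\G{hc}\cup\G{ss}$, $\bn\cdot\dbM\bn$ on $\G{hc}\cup\G{sc}$, and the jumps at $\gamma\in\JD$) fill the corresponding dual trace spaces richly enough that no nonzero Dirichlet-type trace of $v$ can be annihilated by all of them. I would establish this by constructing, for prescribed boundary data, explicit test tensors $\dbM\in H_N$ that realize the required traces --- e.g.\ by solving an auxiliary biharmonic-type problem and invoking the extension/lifting machinery for $H(\dDiv,\Omega;\SS)$ developed for Kirchhoff--Love and Reissner--Mindlin plates in~\cite{FuehrerHN_19_UFK,FuehrerHS_20_UFR}. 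The delicate bookkeeping lies in the corner/edge jump terms at $\gamma\in J$ and in matching them consistently with the point conditions in the definition of $H^2_D(\Omega)$; the continuity from part~(ii) is exactly what guarantees that every pairing in this argument is well defined on the energy spaces.
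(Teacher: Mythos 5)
Your proposal follows essentially the same route as the paper's proof: part (ii) is exactly the paper's computation (insert the $q^2$-terms via $\vdual{q^2\bT\colon\bM}{v}=\vdual{\bM}{q^2\bT v}$, then Cauchy--Schwarz against the weighted norms $\norm{\cdot}_\dDiv$ and $\norm{\cdot}_\Ggrad$), and in (i) the direction ``$\Rightarrow$'' uses the same identity $\dual{\trGg{}(v)}{\bM}=\dual{\trdD{}(\bM)}{v}$ together with the definition of $H_N(\dDiv,\Omega;\SS)$. For the converse, the paper likewise reduces to Green's identity with smooth test tensors and delegates the trace-richness/lifting details you flag as the hard part to the proof of \cite[Proposition~3.8(i)]{FuehrerHN_19_UFK}, so your outlined construction is consistent with, and no less complete than, the paper's own argument.
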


\begin{proof}
Direction ``$\Leftarrow$'' of statement (i) is immediate by considering Green's identity
for smooth tensors $\dbM$, cf.~the proof of \cite[Proposition~3.8(i)]{FuehrerHN_19_UFK}.
Direction ``$\Rightarrow$'' holds by definition of $H_N(\dDiv,\Omega;\SS)$ and the fact that
\begin{gather}
  \label{trvM=trMv}
  \dual{\trGg{}(v)}{\bM}=\dual{\trdD{}(\bM)}{v} \ \ \ \forall
    v\in H^2(\Omega), \ \forall \bM\in H(\dDiv,\Omega;\SS),
\end{gather}
which follows from~\eqref{def:trGg+trdD}.

Bound (ii) is due to the Cauchy--Schwarz inequality and the addition and subtraction of
an appropriate term,
\begin{align*}
   \dual{\trdD{}(\bM)}{v}
   &=
   \vdual{\divDiv\bM}{v}-\vdual{\bM}{\Gradgrad v}\\
   &=
   \vdual{\divDiv\bM+q^2\bT\colon\bM}{v}-\vdual{\bM}{\Gradgrad v+q^2\bT v}
   \le \frac 1B \|\bM\|_\dDiv \|v\|_\Ggrad.
\end{align*}
\end{proof}

\begin{remark}
By the selection of weights in $\|\cdot\|_\Ggrad$ and $\|\cdot\|_\dDiv$,
the duality between traces of $H^2(\Omega)$ and $H(\dDiv,\Omega;\SS)$ involves
the weight $B$ as shown in the second statement of Lemma~\ref{la_tr}.
Of course, this dependence on $B$ can be eliminated by rescaling $\|\cdot\|_\dDiv$ with $B^{-1}$.
We prefer the current weighting since, up to the factor $1/2$,
$\|\cdot\|_\Ggrad$ is the energy norm and the $L_2$-term of $\|\cdot\|_\dDiv$
is also part of the energy norm. This ensures that both norms are equilibrated
and lead to comparable error levels in our numerical experiments.
\end{remark}

\section{Variational formulation of the linear problem} \label{sec_VF}

Trace operators $\trGg{}$ and $\trdD{}$ induce dual pairings among different types of
boundary trace components. Indeed, for $v\in H^2(\Omega)$ and sufficiently smooth
$\bQ\in H(\dDiv,\Omega;\SS)$, integration by parts reveals that
\begin{align} \label{IP}
   &\dual{\trGg{}(v)}{\bQ} = \dual{\trdD{}(\bQ)}{v}
   = \vdual{\div\Div\bQ}{v} - \vdual{\bQ}{\Grad\grad v} \nonumber\\
   &\quad= \dual{\bn\cdot\Div\bQ+\div_\Gamma\pit(\bQ\bn)}{v}_{\Gamma^0}
   - \dual{\bn\cdot\bQ\bn}{\bn\cdot\grad v}_{\Gamma^0}
   + \sum_{\gamma\in \cE_\Gamma} \dual{[\pit(\bQ\bn)]_{\Omega,\gamma}}{v}_\gamma.
\end{align}
Here, $\Gamma^0:=\Gamma\setminus\bigcup\limits_{\gamma\in\cE_\Gamma}\gamma$
denotes the boundary without the vertices ($d=2$) resp. edges ($d=3$) of $\Omega$,
the latter forming the set $\cE_\Gamma$.
Furthermore, $\dual{\cdot}{\cdot}_{\Gamma^0}$ is the edge/face-piecewise $L_2$
bilinear form and its extension by duality.
Operator $\pit$ and jump $[\pit(\bQ\bn)]_{\Omega,\gamma}$ have been defined
in \eqref{pit} and \eqref{jump}, respectively, and
\[
   \dual{[\pit(\bQ\bn)]_{\Omega,\gamma}}{v}_\gamma := \begin{cases}
      [\pit(\bQ\bn)]_{\Omega,\gamma}\, v(x)\quad &(\gamma=\{x\}, d=2),\\
      \int_\gamma [\pit(\bQ\bn)]_{\Omega,\gamma}\, v\quad &(d=3).
   \end{cases}
\]
For details we refer to \cite{FuehrerHN_19_UFK}.
We introduce the tensor variable
\begin{gather}
  \label{M}
  \bM := \Gradgrad u + q^2 \bT u.
\end{gather}
Then, relation \eqref{IP} shows that boundary conditions \eqref{bc} can be
represented as
\begin{align}
   \dual{\trGg{}(u)}{\dbM} &= \dual{\trGg{}(g)}{\dbM}
   &&\hspace{-4em}\forall\dbM\in H_N(\dDiv,\Omega;\SS), \label{bc_u}\\
   \dual{\trdD{}(\bM)}{\du} &= \dual{\trdD{}(\bG)}{\du} \label{bc_M}
   &&\hspace{-4em}\forall\du\in H^2_D(\Omega).
\end{align}
The boundary conditions for $\bM$ are natural with respect to the energy functional
\eqref{energy}, and lead to the selection of the functional
\[
   L(u) = \vdual{f}{u} - B \dual{\trdD{}(\bG)}{u}
\]
with a volume force $f$. We assume that $f\in L_2(\Omega)$.
The resulting formulation of the linear liquid crystal model with boundary conditions
\eqref{bc} as a minimization problem reads
\begin{equation} \label{min}
   u=\argmin_{v\in V_g}
     \frac B2 \int_\Omega \abs{\Grad\grad v + q^2\bT v}^2
   + \frac m2 \int_\Omega v^2
   - \int_\Omega f v + B \dual{\trdD{}(\bG)}{v}
\end{equation}
where
\[
   V_g := \Bigl\{v\in H^2(\Omega); \dual{\trGg{}(v)}{\dbM} = \dual{\trGg{}(g)}{\dbM}\
                              \forall\dbM\in H_N(\dDiv,\Omega;\SS)\Bigr\}.
\]
Let us derive a variational formulation of \eqref{min}.
Making use of the tensor variable $\bM$, variation of $v$ by functions of
$H^2_0(\Omega)$ (with homogeneous Dirichlet data on $\Gamma$) shows that
\begin{align} \label{eq_u}
   B\divDiv (\Gradgrad u + q^2 \bT u) + B (\Gradgrad u+ q^2\bT u) \colon q^2\bT  + m u &=
   \nonumber\\
   B(\divDiv\bM + q^2\bT\colon\bM) + m u  &= f.
\end{align}
Boundary condition \eqref{bc_M} can be shown to hold by variation with functions of $H^2_D(\Omega)$.
Testing~\eqref{eq_u} with $\divDiv\dbM + q^2\bT\colon\dbM$ for $\dbM\in H_N(\dDiv,\Omega;\SS)$
gives
\begin{multline*}
   B \vdual{\divDiv\bM + q^2\bT\colon\bM}{\divDiv \dbM + q^2\bT\colon\dbM}
      + m \vdual{u}{\divDiv \dbM + q^2\bT\colon\dbM} \\
   = \vdual{f}{\divDiv \dbM + q^2\bT\colon\dbM},
\end{multline*}
and an application of trace operator $\trGg{}$ to relation \eqref{M} shows that
\begin{gather*}
  m \vdual{\bM}{\dbM}
    = m \vdual{u}{\divDiv \dbM + q^2\bT\colon\dbM} - m \dual{\trGg{}(u)}{\dbM}.
\end{gather*}
Adding the last two equations eliminates $u$, and  an inclusion of boundary conditions
\eqref{bc_u}, \eqref{bc_M} results in a single variational equation
for $\bM$ and explicit representation of $u$ through \eqref{eq_u}:
\emph{Find $\bM\in H(\dDiv,\Omega;\SS)$ such that}
\begin{subequations} \label{probM}
\begin{align}
   \label{probM1}
   & \ip{\bM}{\dbM}_\dDiv
   =  \frac Bm \vdual{f}{\divDiv\dbM + q^2\bT\colon\dbM}
   - B \dual{\trGg{}(g)}{\dbM},\\
   \label{probM2}
   &\dual{\trdD{}(\bM)}{\du} = \dual{\trdD{}(\bG)}{\du}\\
\intertext{\emph{for any $\dbM\in H_N(\dDiv,\Omega;\SS)$ and $\du\in H^2_D(\Omega)$, and set}}
   &u := \frac 1m f-\frac Bm(\divDiv\bM + q^2\bT\colon\bM). \label{u}
\end{align}
\end{subequations}
Note that $g\in H^2(\Omega)$ and $\bG\in H(\dDiv,\Omega;\SS)$
are arbitrary extensions of the boundary data. Therefore, norms of $g$ and $\bG$
can be considered to be infima, thus giving rise to trace norms of boundary data.
To keep things simple we do not specify these norms explicitly.
We show that problem \eqref{probM} is well posed and provides the solution of \eqref{min}.

\begin{theorem} \label{thm_VF}
Given $f\in L_2(\Omega)$, $\bG\in H(\dDiv,\Omega;\SS)$, and $g\in H^2(\Omega)$,
problem \eqref{probM1}, \eqref{probM2} has a unique solution $\bM$ with bound
\[
   \|\bM\|_\dDiv \le \frac 1{\sqrt{m}} \|f\| + 2 \|\bG\|_\dDiv + \|g\|_\Ggrad.
\]
Furthermore, $u$ defined by \eqref{u} solves \eqref{min},
and $\bM$ equals $\Grad\grad u+q^2\bT u$.
In particular, $u\in H^2(\Omega)$ satisfies \eqref{bc_u}, and is bounded as
\begin{align*}
   \norm{u}_\Ggrad \le 
      \frac{3}{\sqrt{m}} \|f\| + 3 \|\bG\|_\dDiv + 2 \|g\|_\Ggrad
\end{align*}
and
\[
   \|u\| \le \frac 1m \|f\| + \frac Bm \|\divDiv\bM+q^2\bT\colon\bM\|
   \le \frac 1m \|f\| + \frac 1{\sqrt{m}} \|\bM\|_\dDiv.
\]
\end{theorem}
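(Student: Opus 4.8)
The plan is to exploit that $\ip{\cdot}{\cdot}_\dDiv$ is precisely the inner product of the Hilbert space $H(\dDiv,\Omega;\SS)$, so that \eqref{probM1}--\eqref{probM2} is a Riesz representation problem. First I observe that, by the very definition of $H_N(\dDiv,\Omega;\SS)$, the constraint \eqref{probM2} is equivalent to $\bM-\bG\in H_N(\dDiv,\Omega;\SS)$. Writing $\bM=\bG+\bM_0$ with $\bM_0\in H_N(\dDiv,\Omega;\SS)$ reduces \eqref{probM1} to: find $\bM_0\in H_N(\dDiv,\Omega;\SS)$ with $\ip{\bM_0}{\dbM}_\dDiv=F(\dbM)-\ip{\bG}{\dbM}_\dDiv$ for all $\dbM\in H_N(\dDiv,\Omega;\SS)$, where $F(\dbM):=\frac Bm\vdual{f}{\divDiv\dbM+q^2\bT\colon\dbM}-B\dual{\trGg{}(g)}{\dbM}$. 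Since $\trdD{}$ is bounded (Lemma~\ref{la_tr}(ii)), the space $H_N(\dDiv,\Omega;\SS)$ is an intersection of kernels of bounded functionals, hence a closed subspace, and the Riesz theorem yields a unique $\bM_0$, hence a unique $\bM$. Uniqueness also follows directly by testing the homogeneous problem with the difference of two solutions.

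For the bound I need $F$ and $\ip{\bG}{\cdot}_\dDiv$ to be bounded on $H_N(\dDiv,\Omega;\SS)$. From the definition of $\|\cdot\|_\dDiv$ one has $\|\divDiv\dbM+q^2\bT\colon\dbM\|\le\frac{\sqrt m}{B}\|\dbM\|_\dDiv$, which bounds the first term of $F$ by $\frac1{\sqrt m}\|f\|\,\|\dbM\|_\dDiv$. For the second term I use \eqref{trvM=trMv} to rewrite $\dual{\trGg{}(g)}{\dbM}=\dual{\trdD{}(\dbM)}{g}$ and then Lemma~\ref{la_tr}(ii), giving $B|\dual{\trGg{}(g)}{\dbM}|\le\|\dbM\|_\dDiv\|g\|_\Ggrad$. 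Together with $|\ip{\bG}{\dbM}_\dDiv|\le\|\bG\|_\dDiv\|\dbM\|_\dDiv$ and $\|\bM\|_\dDiv\le\|\bM_0\|_\dDiv+\|\bG\|_\dDiv$, the stated estimate $\|\bM\|_\dDiv\le\frac1{\sqrt m}\|f\|+2\|\bG\|_\dDiv+\|g\|_\Ggrad$ follows.

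The main work is to show that $u$ from \eqref{u} lies in $H^2(\Omega)$ and equals the potential of the smectic tensor. Substituting $\divDiv\bM+q^2\bT\colon\bM=(f-mu)/B$ from \eqref{u} into \eqref{probM1} and simplifying collapses the $f$-terms and yields $\vdual{\bM}{\dbM}=\vdual{u}{\divDiv\dbM+q^2\bT\colon\dbM}-\dual{\trGg{}(g)}{\dbM}$ for all $\dbM\in H_N(\dDiv,\Omega;\SS)$. Testing this with $\dbM\in C_c^\infty(\Omega;\SS)$, for which the trace term vanishes, gives $\vdual{\bM-q^2\bT u}{\dbM}=\vdual{u}{\divDiv\dbM}$; the right-hand side is exactly the distributional pairing of the Hessian of $u$ against $\dbM$, so the distributional Hessian of $u$ equals $\bM-q^2\bT u\in L_2(\Omega;\SS)$. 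As $u\in L_2(\Omega)$ and its second-order distributional derivatives are in $L_2(\Omega)$, on the bounded Lipschitz domain $\Omega$ this forces $u\in H^2(\Omega)$ with $\Gradgrad u=\bM-q^2\bT u$, i.e.\ $\bM=\Gradgrad u+q^2\bT u$. Feeding $u\in H^2(\Omega)$ back into the displayed identity and using the defining relation \eqref{def:trGg} for $\trGg{}(u)$ then shows $\dual{\trGg{}(u)}{\dbM}=\dual{\trGg{}(g)}{\dbM}$ on $H_N(\dDiv,\Omega;\SS)$, which is \eqref{bc_u}; hence $u\in V_g$. This identification step — promoting the $L_2$-defined $u$ to an $H^2$-function and matching the essential trace — is the part I expect to be most delicate, since it hinges on the interplay between the distributional characterization and the trace duality of Lemma~\ref{la_tr}.

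Finally, to see that $u$ minimizes \eqref{min}, I note that $V_g=g+H^2_D(\Omega)$, so by Lemma~\ref{la_tr}(i) the admissible variations form $H^2_D(\Omega)$. Computing the first variation of the strictly convex energy at $u$ in a direction $\dv\in H^2_D(\Omega)$, I replace $\vdual{\bM}{\Gradgrad\dv}$ using \eqref{def:trdD} and then invoke \eqref{eq_u} to cancel the volume terms, leaving $B\bigl(\dual{\trdD{}(\bG)}{\dv}-\dual{\trdD{}(\bM)}{\dv}\bigr)$, which vanishes by \eqref{probM2}. Since the functional is strictly convex and coercive in $\|\cdot\|_\Ggrad$, this stationarity identifies $u$ as the unique minimizer. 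The asserted bounds on $\|u\|$ and $\|u\|_\Ggrad$ then follow from \eqref{u}, the estimate $\|\divDiv\bM+q^2\bT\colon\bM\|\le\frac{\sqrt m}{B}\|\bM\|_\dDiv$, the identity $\|u\|_\Ggrad^2=m\|u\|^2+B\|\bM\|^2$, and the established bound on $\|\bM\|_\dDiv$, via elementary Cauchy--Schwarz estimates and $\sqrt{x^2+y^2}\le x+y$.
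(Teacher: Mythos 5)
Your proposal is correct and follows essentially the same route as the paper's proof: the same shift $\bM-\bG\in H_N(\dDiv,\Omega;\SS)$ with a Riesz/projection argument and the stability bound via \eqref{trvM=trMv} and Lemma~\ref{la_tr}(ii), the same distributional identification $\Gradgrad u+q^2\bT u=\bM$ (testing with $C_c^\infty$ tensors) to get $u\in H^2(\Omega)$ and then \eqref{bc_u}, with your explicit first-variation/convexity argument merely filling in what the paper dismisses as ``clear.'' One small point: when assembling the $\|u\|_\Ggrad$ bound, the termwise estimate $\sqrt{B}\|\bM\|\le\|\bM\|_\dDiv$ would give $4\|\bG\|_\dDiv$, so you must (as the paper does) combine the two terms via $a+b\le\sqrt{2}\,(a^2+b^2)^{1/2}$ to land within the stated constants $\bigl(1+\sqrt2,\,2\sqrt2,\,\sqrt2\bigr)\le(3,3,2)$ — your appeal to ``elementary Cauchy--Schwarz estimates'' covers this, but it is the one place where the crude bound fails.
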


\begin{proof}
Problem \eqref{probM1}, \eqref{probM2} is well posed as a projection,
noting the boundedness of the right-hand side and trace functionals.
Setting $\wtilde\bM:=\bM-\bG\in H_N(\dDiv,\Omega;\SS)$,
\eqref{probM1}, \eqref{trvM=trMv} and Lemma~\ref{la_tr}(ii) show that
\begin{align*}
   \|\wtilde\bM\|_\dDiv^2
   &\le \frac 1{\sqrt{m}} \|f\|
        \frac B{\sqrt{m}} \|\divDiv\wtilde\bM + q^2\bT\colon\wtilde\bM\|
   +   \bigl(\|\bG\|_\dDiv+\|g\|_\Ggrad\bigr) \|\wtilde\bM\|_\dDiv\\
   &\le \bigl(\frac 1{\sqrt{m}}\|f\| + \|\bG\|_\dDiv + \|g\|_\Ggrad\bigr)
          \norm{\wtilde\bM}_\dDiv.
\end{align*}
An application of the triangle inequality yields the claimed bound for $\|\bM\|_\dDiv$.

Next, we show that \mbox{$u\in H^2$} is bounded as stated and satisfies the boundary conditions.
By definition \eqref{u} and the regularities of $f$ and $\bM$, we have $u\in L_2(\Omega)$.
We calculate $\Gradgrad u+q^2\bT u$ in the distributional sense.
Given $\dbM\in C^\infty_0(\Omega;\SS)$, we find by definition of $u$ and relation
\eqref{probM1} that
\begin{align*}
  (\Gradgrad u+q^2\bT u)(\dbM)
    & = \vdual{u}{\divDiv\dbM + q^2\bT\colon\dbM}\\
    & = \vdual{\frac 1m f - \frac Bm (\divDiv\bM+q^2\bT\colon\bM)}{\divDiv\dbM + q^2\bT\colon\dbM}
      = \vdual{\bM}{\dbM},
\end{align*}
that is,
\begin{gather*}
   \Gradgrad u+q^2\bT u = \bM \in H(\dDiv,\Omega;\SS).
\end{gather*}
This implies \mbox{$u\in H^2(\Omega)$}.
Using this regularity, 
\eqref{u} and \mbox{$\bM=\Gradgrad u+q^2\bT u$},
an application of trace operator $\trGg{}$
and relation \eqref{probM1} show that
\begin{align*}
   \dual{\trGg{}(u)}{\dbM}
   = \vdual{u}{\divDiv\dbM+q^2\bT\colon\dbM}
   - \vdual{\Gradgrad u + q^2\bT u}{\dbM} 
   = \dual{\trGg{}(g)}{\dbM}
\end{align*}
for any $\dbM\in H_N(\dDiv,\Omega;\SS)$, thus confirming that $u$ satisfies boundary condition
\eqref{bc_u}. It is clear that $u$ is the unique solution of \eqref{min} with boundary conditions
\eqref{bc} formulated as \eqref{bc_u}, \eqref{bc_M}.
The bound for $\|u\|_\Ggrad$ is due to relations \eqref{u},
$\bM=\Gradgrad u+q^2\bT u$, and the bound on $\bM$,
\begin{align*}
  \|u\|_\Ggrad
     & \le \sqrt{m} \|u\| + \sqrt{B} \|\bM\|
       \le \frac 1{\sqrt{m}} \|f\|
                + \frac B{\sqrt{m}} \|\divDiv\bM+q^2\bT\colon\bM\|
                + \sqrt{B} \|\bM\|\\
     & \le \frac 1{\sqrt{m}}\|f\| + \sqrt{2} \|\bM\|_\dDiv
       \le \frac 1{\sqrt{m}} \|f\|
             + \sqrt{2} \left(\frac 1{\sqrt{m}} \|f\| + 2 \|\bG\|_\dDiv + \|g\|_\Ggrad
                          \right)\\
     & = \frac{1+\sqrt{2}}{\sqrt{m}} \|f\|
           + 2^{3/2} \|\bG\|_\dDiv + \sqrt{2} \|g\|_\Ggrad.
\end{align*}
This finishes the proof.
\end{proof}

\section{Nonlinear model in two dimensions} \label{sec_nonl}

Model \eqref{min} is a linear simplification of the nonlinear model with energy
\begin{align*}
   \frac B2 \int_\Omega \abs{\Grad\grad v + q^2\bnu\bnu^\top v}^2
   + \frac m2 \int_\Omega v^2
   + \frac K2 \int_\Omega |\Grad\bnu|^2 - L(v)
\end{align*}
where $K>0$ and also the tensor $\bT=\bnu\bnu^\top$ with $|\bnu|=1$ is unknown,
see~\cite{FarrellHM_FED,PevnyiSS_14_MSL}.
This model requires $\bnu\in H^1(\Omega;\R^d)$ which excludes
some relevant cases where $\bnu$ has point or line singularities.
Appropriate boundary conditions for $\bnu$ are essential and can include
periodicity. For simplicity we assume a given direction of $\bnu$ on the whole boundary.

We consider the two-dimensional case ($d=2)$ and
use the representation $\bnu=(\cos\phi,\sin\phi)^\top$. Then
\begin{align*}
   |\Grad\bnu|^2 = |(-\sin\phi,\cos\phi)^\top \grad\phi^\top|
   = |\grad\phi|^2,
\end{align*}
and, maintaining boundary conditions \eqref{bc} and imposing $\phi=\eta$ on $\Gamma$ for
$\eta\in H^1(\Omega)$, the nonlinear minimization problem becomes
\begin{subequations} \label{min_nonl}
\begin{align}
   &(u,\phi) = \argmin \{E(v,\psi);\; v\in V_g,\; \psi\in W_\eta\}
\end{align}
with
\begin{align}
   &E(v,\psi) =
   \frac B2 \int_\Omega \abs{\Grad\grad v + q^2\bT(\psi)v}^2
   + \frac m2 \int_\Omega v^2
   + \frac K2 \int_\Omega |\grad\psi|^2
   - \int_\Omega f v + B \dual{\trdD{}(\bG)}{v}.
\end{align}
\end{subequations}
Here,
\[
   \bT(\psi) = \begin{pmatrix} \cos^2\psi & \sin\psi\cos\psi\\
                               \sin\psi\cos\psi & \sin^2\psi
               \end{pmatrix}
\]
and, repeating the definition of $V_g$,
\begin{align*}
   V_g &:= \Bigl\{v\in H^2(\Omega); \dual{\trGg{}(v)}{\dbM} = \dual{\trGg{}(g)}{\dbM}\
                              \forall\dbM\in H_N(\dDiv,\Omega;\SS)\Bigr\},\\
   W_\eta &:= \Bigl\{\psi\in H^1(\Omega);\; \psi-\eta\in H^1_0(\Omega)\}.
\end{align*}
Analogously as before we introduce the tensor
\begin{equation*}
   \bM=\bM(u,\phi):=\Grad\grad u+q^2\bT(\phi)u
\end{equation*}
and obtain the Euler--Lagrange equations of \eqref{min_nonl},
\begin{align*} 
   B(\divDiv\bM + q^2\bT(\phi)\colon\bM) + m u &= f,\\
   -K \Delta\phi + Bq^2\bM\colon \bT'(\phi)u &= 0.
\end{align*}
Here,
\[
   \bT'(\phi) = \begin{pmatrix} -2\sin\phi\cos\phi & \cos^2\phi-\sin^2\phi\\
                                \cos^2\phi-\sin^2\phi & 2\sin\phi\cos\phi
                \end{pmatrix}
              = \begin{pmatrix} -\sin(2\phi) & \cos(2\phi)\\
                                \cos(2\phi) & \sin(2\phi)
                \end{pmatrix}
\]
is the derivative of $\bT(\phi)$ with respect to $\phi$.
In particular, the nonlinear energy leads to a $\phi$-dependent inner product
in $H(\dDiv,\Omega;\SS)$,
\[
   \ip{\bM}{\dbM}_{\dDiv,\phi}
   := B \vdual{\bM}{\dbM}
   + \frac{B^2}{m} \vdual{\divDiv\bM + q^2\bT(\phi)\colon\bM}
                           {\divDiv\dbM + q^2\bT(\phi)\colon\dbM}.
\]
We proceed with unknown $\bM$ as before, cf.~\eqref{probM}, keep considering $u$ as
an $L_2(\Omega)$ unknown that now depends on $\bM$ and $\phi$, and add a standard
variational formulation of the semilinear Poisson problem for $\phi$.
This leads to a nonlinear variational system for
$\bM\in H(\dDiv,\Omega;\SS)$, $u\in L_2(\Omega)$, and $\phi\in H^1(\Omega)$:
\begin{subequations} \label{Mphiu}
\begin{align}
   \label{Mphiu1}
   &\ip{\bM}{\dbM}_{\dDiv,\phi}
   =  \frac Bm \vdual{f}{\divDiv\dbM + q^2\bT(\phi)\colon\dbM}
   - B\dual{\trGg{}(g)}{\dbM},\\
   \label{Mphiu2}
   &\vdual{u}{\du}+\frac Bm \vdual{\divDiv\bM + q^2\bT(\phi)\colon\bM}{\du}
   = \frac 1m \vdual{f}{\du},\\
   \label{Mphiu3}
   &K \vdual{\grad\phi}{\grad\dphi} + Bq^2\vdual{\bM\colon \bT'(\phi)u}{\dphi}
   = 0,\\
   \label{Mphiu4}
   &\dual{\trdD{}(\bM)}{\dpsi} = \dual{\trdD{}(\bG)}{\dpsi}
   \qquad\text{and}\qquad 
   \phi=\eta\ \text{on}\ \Gamma
\end{align}
\end{subequations}
for any $\dbM\in H_N(\dDiv,\Omega;\SS)$, $\du\in L_2(\Omega)$, $\dphi\in H^1_0(\Omega)$,
$\dpsi\in H^2_D(\Omega)$.

This problem has a solution.

\begin{theorem} \label{thm_VF_nonl}
Given $B,K,m>0$, $f\in L_2(\Omega)$, $\bG\in H(\dDiv,\Omega;\SS)$, $g\in H^2(\Omega)$, and
$\eta\in H^1(\Omega)$, problem \eqref{min_nonl} has a solution $u,\phi$,
and so has \eqref{Mphiu} with $u\in H^2(\Omega)$ and $\bM=\Grad\grad u+q^2\bT(\phi)u$
satisfying boundary conditions \eqref{bc}.
\end{theorem}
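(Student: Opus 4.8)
The plan is to prove existence by the direct method of the calculus of variations applied to the constrained minimization \eqref{min_nonl}, and then to recover the variational system \eqref{Mphiu}, the regularity $u\in H^2(\Omega)$, and the boundary conditions from the associated Euler--Lagrange equations, in analogy with the linear case of Theorem~\ref{thm_VF}. The admissible set $V_g\times W_\eta$ is nonempty (it contains $(g,\eta)$) and $E$ is finite on it.

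First I would check that $E$ is bounded below and coercive on $V_g\times W_\eta\subset H^2(\Omega)\times H^1(\Omega)$. The quadratic part controls $m\norm{v}^2 + B\norm{\Gradgrad v + q^2\bT(\psi)v}^2 + K\norm{\grad\psi}^2$; since $\bT(\psi)=\bnu\bnu^\top$ with $|\bnu|=1$ gives $\abs{\bT(\psi)}\le 1$ pointwise, the triangle inequality yields $\norm{\Gradgrad v}\le\norm{\Gradgrad v + q^2\bT(\psi)v} + q^2\norm{v}$. Together with the standard norm equivalence $\norm{v}_{H^2}\le C(\norm{v}+\norm{\Gradgrad v})$ on the bounded Lipschitz domain and the Poincar\'e inequality on $W_\eta=\eta+H^1_0(\Omega)$, this bounds $v$ in $H^2(\Omega)$ and $\psi$ in $H^1(\Omega)$. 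The linear terms $-\vdual fv + B\dual{\trdD{}(\bG)}{v}$ are of lower order: using the definition of $\trdD{}$ they are estimated by $\norm{f}\,\norm{v} + B(\norm{\divDiv\bG}\,\norm{v}+\norm{\bG}\,\norm{\Gradgrad v})$ and absorbed by Young's inequality. Hence a minimizing sequence $(v_n,\psi_n)$ is bounded, and after extracting a subsequence $v_n\rightharpoonup u$ in $H^2(\Omega)$ and $\psi_n\rightharpoonup\phi$ in $H^1(\Omega)$. As the constraint sets are closed affine subspaces (the trace conditions \eqref{bc_u} are linear and $\trGg{}$ is bounded), they are weakly closed, so $u\in V_g$ and $\phi\in W_\eta$.

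I expect the crux to be weak lower semicontinuity of $E$, which is not automatic since the smectic term is nonconvex in $(v,\psi)$ through $\bT(\psi)$. The plan is to exploit compactness: by Rellich--Kondrachov $v_n\to u$ and $\psi_n\to\phi$ strongly in $L_2(\Omega)$, and along a further subsequence $\psi_n\to\phi$ almost everywhere. Since the entries of $\bT(\cdot)$ are bounded and continuous, dominated convergence together with the strong convergence of $v_n$ gives $q^2\bT(\psi_n)v_n\to q^2\bT(\phi)u$ strongly in $L_2(\Omega;\SS)$, while $\Gradgrad v_n\rightharpoonup\Gradgrad u$ weakly. Thus $\Gradgrad v_n + q^2\bT(\psi_n)v_n\rightharpoonup\Gradgrad u + q^2\bT(\phi)u$ weakly, and weak lower semicontinuity of the $L_2$-norm gives $\norm{\Gradgrad u + q^2\bT(\phi)u}^2\le\liminf\norm{\Gradgrad v_n + q^2\bT(\psi_n)v_n}^2$. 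The remaining quadratic terms are weakly lower semicontinuous and the linear terms weakly continuous, so $E(u,\phi)\le\liminf E(v_n,\psi_n)=\inf E$ and $(u,\phi)$ minimizes \eqref{min_nonl}. This compactness argument for the nonconvex coupling term is the main obstacle; the rest reduces to the linear reasoning.

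Finally I would pass to the system \eqref{Mphiu}. By construction $u\in V_g\subset H^2(\Omega)$, so \eqref{bc_u} already holds; setting $\bM:=\Gradgrad u + q^2\bT(\phi)u\in L_2(\Omega;\SS)$ and taking first variations of $E$ in $v$ and $\psi$ produces the Euler--Lagrange identities displayed above, in particular $B(\divDiv\bM + q^2\bT(\phi)\colon\bM)+mu=f$. This identity shows $\divDiv\bM=\frac1B(f-mu)-q^2\bT(\phi)\colon\bM\in L_2(\Omega)$, so $\bM\in H(\dDiv,\Omega;\SS)$ and hence $u\in H^2(\Omega)$ with $\bM=\Gradgrad u+q^2\bT(\phi)u$ as claimed. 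Testing this equation exactly as in the derivation of \eqref{probM} (with $\bT$ replaced by $\bT(\phi)$) reproduces \eqref{Mphiu1} and the relation \eqref{Mphiu2} defining $u$, the weak form of the Poisson equation is \eqref{Mphiu3}, and the natural conditions \eqref{bc_M}, equivalently \eqref{Mphiu4}, follow from free variation within $V_g$ as in Theorem~\ref{thm_VF}. Hence the minimizer solves \eqref{Mphiu} and satisfies the boundary conditions \eqref{bc}.
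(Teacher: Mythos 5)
Your proposal is correct, and at the level of overall strategy it coincides with the paper's: both apply the direct method to \eqref{min_nonl} (coercivity plus weak lower semicontinuity) and then recover \eqref{Mphiu}, the $H^2$-regularity statement, and the boundary conditions from the Euler--Lagrange equations as in the linear case. The genuine difference lies in how the key step, weak lower semicontinuity, is established. The paper shifts to the homogeneous classes $V_0\times W_0=H^2_D(\Omega)\times H^1_0(\Omega)$, checks that the integrand $F\bigl(x,(\xi_v,\xi_\psi),(\bA_v,A_\psi)\bigr)=\tfrac B2\abs{\bA_v+q^2\bT(\xi_\psi)\xi_v}^2+\tfrac m2\xi_v^2+\tfrac K2|A_\psi|^2$ is a Carath\'eodory function that is convex in the highest-order variables $(\bA_v,A_\psi)$ for fixed $(\xi_v,\xi_\psi)$, and then cites the general lower-semicontinuity theorem \cite[Theorem~2.11]{Rindler_18_CV}. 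You instead prove sequential weak lower semicontinuity by hand: Rellich compactness gives $v_n\to u$, $\psi_n\to\phi$ in $L_2(\Omega)$ and $\psi_n\to\phi$ a.e.\ along a subsequence, dominated convergence yields strong $L_2$ convergence of the nonconvex coupling $q^2\bT(\psi_n)v_n$, so $\Gradgrad v_n+q^2\bT(\psi_n)v_n$ converges weakly and weak lower semicontinuity of the $L_2$-norm applies. This is precisely the mechanism (convexity in the top-order derivatives, compactness in the lower-order variables) that the cited theorem packages abstractly, so your argument can be read as an unpacking of it for this specific functional: it buys self-containedness and makes transparent why the joint nonconvexity in $(v,\psi)$ is harmless, at the cost of the subsequence bookkeeping that the paper avoids by quoting the reference. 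Your coercivity argument (triangle inequality plus absorption of the linear terms by Young's inequality, Poincar\'e for $\psi$) is equivalent to the paper's explicit $\delta$-splitting with $\delta=1+\tfrac m{2Bq^4}$, and your recovery of \eqref{Mphiu} — distributional Euler--Lagrange equation giving $\divDiv\bM\in L_2(\Omega)$, hence $\bM\in H(\dDiv,\Omega;\SS)$, then testing as in the derivation of \eqref{probM} and reading off \eqref{bc_M}/\eqref{Mphiu4} from variations by $H^2_D(\Omega)$ functions — fills in details the paper compresses into the sentence that the rest of the statement follows once a minimizer exists.
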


\begin{proof}
Once having a solution $u,\phi$ to \eqref{min_nonl}
the rest of the statement follows.
To prove the existence of a solution to \eqref{min_nonl}
we use standard arguments from the calculus of variations.
By considering $u-g$ and $\phi-\eta$ instead of $u$ and $\phi$, respectively,
we can assume that we are minimizing over
$(v,\psi)\in V_0\times W_0:=H^2_D(\Omega)\times H^1_0(\Omega)$.
The existence follows from the coercivity and (sequential) weak lower semicontinuity
of $E$ in $V_0\times W_0$, see, e.g., \cite[Theorem~2.3]{Rindler_18_CV}.

By the lower triangle inequality, Young's inequality, the definition of
$\trdD{}(\bG)$, and relation $\|\bT(\psi)v\|^2 = \|v\|^2$, we bound with $\delta>1$
\begin{align*}
   E(v,\psi)
   &\ge
   \frac B2\Bigl((1-\delta^{-1})\|\Grad\grad v\|^2 + (1-\delta) \|q^2\bT(\psi)v\|^2\Bigr)
    + \frac m2 \|v\|^2
    + \frac K2 \|\grad\psi\|^2\\
    &\quad
    - \|f\| \|v\|
    - \frac B2 \bigl( \|\bG\|^2 + \|\div\Div\bG\|^2\bigr)^{1/2}
               \bigl(\|\Grad\grad v\|^2 + \|v\|^2\bigr)^{1/2}\\
    &=
   \frac B2 (1-\delta^{-1})\|\Grad\grad v\|^2 + \frac {m+(1-\delta)Bq^4}2 \|v\|^2 + \frac K2 \|\grad\psi\|^2\\
    &\quad
    - \|f\| \|v\|
    - \frac B2 \bigl( \|\bG\|^2 + \|\div\Div\bG\|^2\bigr)^{1/2}
               \bigl(\|\Grad\grad v\|^2 + \|v\|^2\bigr)^{1/2}.
\end{align*}
We select $\delta=1+\frac m{2Bq^4}$ so that $1-\delta^{-1}$ and $m+(1-\delta)Bq^4=m/2$ are positive,
and conclude that $E(v_k,\psi_k)\to\infty$ ($k\to\infty$) for any sequence
$((v_k,\psi_k))\subset V_0\times W_0$ with
$\|v_k\|+\|\Grad\grad v_k\| + \|\psi_k\| + \|\grad\psi_k\|\to\infty$.
Here, we also make use of the Poincar\'e inequality to control $\|\psi_k\|$ by $\|\grad\psi_k\|$.
This shows the coercivity of $E$.

To verify the weak lower semicontinuity of $E$ in $V_0\times W_0$ we note that
\begin{align*}
   &F:\;\Omega\times\R\times\R\times\R^{d\times d}\times\R^d \to [0,\infty),\\
   &F(x,(\xi_v,\xi_\psi),(\bA_v,A_\psi))
   := \frac B2 \abs{\bA_v + q^2\bT(\xi_\psi)\xi_v}^2
   + \frac m2 \xi_v^2 + \frac K2 |A_\psi|^2
\end{align*}
is a Carath\'eodory function because $x\mapsto F(x,(\xi_v,\xi_\psi),(\bA_v,A_\psi))$
is Lebesgue-measurable for every
$((\xi_v,\xi_\psi),(\bA_v,A_\psi))\in \R\times\R\times\R^{d\times d}\times\R^d$
and
$((\xi_v,\xi_\psi),(\bA_v,A_\psi))\mapsto F(x,(\xi_v,\xi_\psi),(\bA_v,A_\psi))$
is continuous almost everywhere on $\Omega$.
Furthermore, for almost every $x\in\Omega$,
$F(x,(\xi_v,\xi_\psi),(\cdot,\cdot))$ is convex for every $(\xi_v,\xi_\psi)\in\R\times\R$.
This shows that
\[
   E(v,\psi)=\int_\Omega F(x,(v,\psi),(\Grad\grad v,\grad\psi))\,\mathrm{d}x
   - \int_\Omega f v + B \dual{\trdD{}(\bG)}{v}
\]
is weakly lower semicontinuous in $V_0\times W_0$, see~\cite[Theorem~2.11]{Rindler_18_CV},
and finishes the proof.
\end{proof}

\section{Discretization} \label{sec_disc}

We start with a discretization of the linear problem \eqref{probM}
and describe our method for the nonlinear problem \eqref{Mphiu} in two dimensions
afterwards.

\subsection{Linear problem: projection} \label{sec_disc_lin}

We consider a shape-regular, regular decomposition $\mesh$ of $\Omega$ into open simplices $\el$.
By regular we mean that all elements are non-degenerate and two distinct but
touching elements either share one vertex or one edge or, if $d=3$, one face.
The set of relatively open faces (edges if $d=2$) of the mesh is denoted by $\cF$
and the set of relatively open edges (vertex singletons if $d=2$) by $\cE$.
We use $\cF(\el)$ and $\cE(\el)$ to denote the faces (edges) and edges (vertex singletons) of
an element $\el\in\mesh$. Furthermore, $\cF_0$ and $\cE_0$ denote the sets
of interior faces/edges and interior edges/vertex singletons ($d=3$/$d=2$),
respectively. We use the mesh-size function $h_\mesh$ with
$h_\mesh|_\el:=\diam(\el)$ for $\el\in\mesh$.

The space of scalar polynomials of degree $\le p$ on $\el\in\mesh$, resp. on $F\in\cF(\el)$,
is denoted by $P^p(\el)$, resp. $P^p(F)$, and the vector-valued polynomial space on $\el$
is $P^p(\el;\R^d)$. 
$P_\hom^p(\el)\subset P^p(\el)$ is the space of homogeneous polynomials of degree $p$.
The $L_2(\el)$ projection operator onto polynomial spaces of degree $p$ is denoted by
$\Pi^p_\el$, and the $\mesh$-piecewise operator by $\Pi^p_\mesh$.
We use the generic notation $\bn$ for exterior normal vectors on faces
$F\in\cF(\el)$.

The construction of our $H(\dDiv,\el;\SS)$-element is taken from \cite{FuehrerH_25_MKL}, with
extension to $d=3$ specified in \cite{ChenH_25_NDD}, and is based on the Raviart--Thomas space
\begin{align*}
  \RT^p(\el) = \bx P_\hom^p(\el) \oplus P^p(\el;\R^d),
  \quad \el\in\mesh.
\end{align*}
Here, we slightly abuse notation by writing $\bx$ for the function $\R^d\ni \bx \mapsto\bx$
and space $\bx P_\hom^p(\el) := \set{\bx\eta}{\eta\in P_\hom^p(\el)}$.
Similarly, $\bx^\top$ corresponds to the function $\bx\mapsto \bx^\top$.
On the other hand, points $x\in\R^d$ are written with standard italian font.
We will use dyadic products also of vector-valued functions, i.e.,
\begin{align*}
  \bphi\bpsi^\top:\; x\mapsto\bphi(x)\bpsi(x)^\top
\end{align*}
for $x$ in the (common) domain of vector functions $\bphi$, $\bpsi$, and denote
\begin{align*}
  X\otimes Y := \span\set{\bphi\bpsi^\top}{\bphi\in X,\ \bpsi\in Y}
\end{align*}
for spaces $X,Y$ of vector functions. Furthermore,
\begin{align*}
  \sym(\X) := \set{\sym(\bM)}{\bM\in\X}\quad\text{with}\quad
  \sym(\bM) := \frac12(\bM+\bM^\top)
\end{align*}
for tensor spaces $\X$.
Our local space of shape functions on an element $\el\in\mesh$ is defined as
\begin{align}
  \X(\el) = \sym(\RT^0(\el)\otimes\RT^1(\el))
\end{align}
with the following degrees of freedom,
\begin{subequations}\label{dofs}
\begin{alignat}{3}
    &\text{face moments}\qquad &&\dual{\bn\cdot\bM\bn}{q}_F
    &\quad  \bigl(q\in P^1(F),\ F\in\cF(\el)\bigr), \label{dofs1}\\
    &\text{face moments}\qquad
    &&\dual{\nDiv(\bM)}{q}_F
    &\quad \bigl(q\in P^1(F),\ F\in\cF(\el)\bigr), \label{dofs2}\\
    &\text{jump values}\qquad
    &&[\pit(\bM\bn)]_{\el,e}
    &\quad \bigl(e\in \cE(\el)\bigr)\quad\text{if $d=2$}, \label{dofs3}\\
    &\text{jump moments}\qquad
    &&\dual{[\pit(\bM\bn)]_{\el,e}}{q}_e
    &\quad \bigl(q\in P^1(e),\ e\in \cE(\el)\bigr)\quad\text{if $d=3$} \label{dofs4}
\end{alignat}
\end{subequations}
(recall definition \eqref{nDiveff} of operator $\nDiv$
and \eqref{pit},~\eqref{jump} for jump $[\pit(\bM\bn)]_{\el,e}$).

By \cite[Proposition~7]{FuehrerH_25_MKL}, \cite[\S 2.5]{ChenH_25_NDD}, the local spaces satisfy
\begin{align} \label{dDivlin}
   \divDiv\X(\el)=P^1(\el)\quad (\el\in\mesh).
\end{align}
We select the discrete spaces
\[
   \X(\mesh) := \bigl(\Pi_{\el\in\mesh} \X(\el)\bigr) \cap H(\dDiv,\Omega;\SS)
   \quad\text{and}\quad
   \X_N(\mesh) := \X(\mesh)\cap H_N(\dDiv,\Omega;\SS).
\]
Conformity is achieved by matching degrees of freedom \eqref{dofs1}, \eqref{dofs2}
for faces $F\in\cF_0$, and by requiring that the sums of individual degrees of freedom
\eqref{dofs3} resp. \eqref{dofs4} for individual edges (vertices) $e\in\cE_0$ vanish,
see \cite[Proposition~3.6]{FuehrerHN_19_UFK}.

Degrees of freedom \eqref{dofs} induce an interpolation operator
\[
   \Pi^\dDiv_\mesh:\; H(\dDiv,\Omega;\SS)\cap H^{1+r}(\mesh;\SS)\to \X(\mesh)
   \quad (r>1/2).
\]
Let us recall some of its properties.

\begin{lemma} \label{la_PidDiv}
Operator $\Pi^\dDiv_\mesh$ satisfies the commutativity property
\[
   \Pi^1_\mesh \circ \divDiv = \divDiv \circ \Pi^\dDiv_\mesh.
\]
Let $r>3/2$, $s>0$ and $\bM\in H^r(\Omega;\SS)$ be given with $\divDiv\bM\in H^s(\Omega)$.
The interpolation $\bM_h:=\Pi^\dDiv_\mesh\bM$ satisfies
\begin{align*}
   \|\bM-\bM_h\|
   &\le C \|h_\mesh\|_\infty^{\min\{r,2\}} \|\bM\|_{\min\{r,2\}},\\
   \|\divDiv(\bM-\bM_h)\|
   &\le C \|h_\mesh\|_\infty^{\min\{s,2\}} \|\divDiv\bM\|_{\min\{s,2\}}
\end{align*}
with a positive constant $C$ that is independent of
$\bM$ and $\mesh$ (under the shape-regularity assumption).
\end{lemma}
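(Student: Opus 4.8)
The plan is to establish the two interpolation-error estimates of Lemma~\ref{la_PidDiv} by localizing to individual elements, invoking a Bramble--Hilbert/scaling argument, and then combining the local bounds; the commutativity property I would treat first since it underpins the second error estimate. For commutativity, the key observation is that the degrees of freedom \eqref{dofs} were chosen so that $\divDiv\X(\el)=P^1(\el)$ (relation \eqref{dDivlin}), and that the face moments \eqref{dofs1}, \eqref{dofs2} together with the jump functionals \eqref{dofs3}, \eqref{dofs4} determine $\divDiv\Pi^\dDiv_\mesh\bM$ through integration by parts. Concretely, on each element $\el$ I would show that for $q\in P^1(\el)$ the quantity $\vdual{\divDiv(\Pi^\dDiv_\mesh\bM)}{q}_\el$ can be rewritten, via Green's identity \eqref{IP} applied on $\el$, purely in terms of the face and jump degrees of freedom of $\bM$, and hence equals $\vdual{\divDiv\bM}{q}_\el$. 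Since $\divDiv\Pi^\dDiv_\mesh\bM\in P^1(\el)$ by \eqref{dDivlin} and $\Pi^1_\mesh(\divDiv\bM)$ is the $L_2(\el)$-projection onto $P^1(\el)$, testing against all $q\in P^1(\el)$ yields $\divDiv\Pi^\dDiv_\mesh\bM=\Pi^1_\mesh\divDiv\bM$ elementwise, which is the asserted identity.

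For the first error estimate, I would work element by element on the reference element $\hat T$ and pull back via the affine map to $\el$. The degrees of freedom \eqref{dofs} are bounded linear functionals on $H^{1+r}(\hat T;\SS)$ for $r>1/2$ (the trace and normal-derivative-type traces appearing in $\bn\cdot\bM\bn$, $\nDiv(\bM)$, and the edge/vertex jumps are well-defined at this regularity), so the local interpolation operator $\Pi^\dDiv_{\hat T}$ reproduces polynomials in $\X(\hat T)\supseteq$ the relevant polynomial space and is bounded. A standard Bramble--Hilbert argument on $\hat T$ gives $\|\bM-\Pi^\dDiv_{\hat T}\bM\|_{L_2(\hat T)}\le C|\bM|_{H^{\min\{r,2\}}(\hat T)}$, and scaling back to $\el$ under the shape-regularity assumption introduces the factor $h_\el^{\min\{r,2\}}$; summing over elements produces $\|\bM-\bM_h\|\le C\|h_\mesh\|_\infty^{\min\{r,2\}}\|\bM\|_{\min\{r,2\}}$. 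The truncation at $2$ reflects that $\X(\el)$ reproduces polynomials only up to the degree for which the approximation is exact, so no higher-order gain is available.

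The second error estimate then follows cleanly from the commutativity property: since $\divDiv(\bM-\bM_h)=\divDiv\bM-\Pi^1_\mesh\divDiv\bM$, the error in the iterated divergence is exactly the $\mesh$-piecewise $L_2$-projection error of the scalar field $\divDiv\bM\in H^s(\Omega)$ onto $P^1(\mesh)$. A classical piecewise polynomial approximation estimate for the $L_2$-projection $\Pi^1_\mesh$ gives $\|\divDiv\bM-\Pi^1_\mesh\divDiv\bM\|\le C\|h_\mesh\|_\infty^{\min\{s,2\}}\|\divDiv\bM\|_{\min\{s,2\}}$, again with the truncation at $2$ since $P^1$ reproduces at most quadratic behavior up to the relevant order. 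The main obstacle I anticipate is the rigorous verification of the commutativity identity at the required regularity: one must carefully justify the integration-by-parts step relating $\divDiv\Pi^\dDiv_\mesh\bM$ to the face and jump degrees of freedom, ensuring that all boundary pairings are well-defined and that the jump contributions across shared edges/vertices cancel correctly under the conformity conditions matching degrees of freedom \eqref{dofs1}, \eqref{dofs2} across $\cF_0$ and enforcing vanishing jump-sums over $\cE_0$. Once commutativity is secured, both the regularity-appropriate scaling of the degrees of freedom and the Bramble--Hilbert lemma are routine, and I would cite \cite{FuehrerH_25_MKL,ChenH_25_NDD} for the detailed verification of these element-level properties.
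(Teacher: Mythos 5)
Your proposal is correct and takes essentially the same route as the paper: the paper also deduces the iterated-divergence error bound directly from the commutativity property, and for commutativity and the $L_2$ estimate it simply cites \cite[Proposition~10]{FuehrerH_25_MKL} (with the case $d=3$ declared analogous, cf.~\cite{ChenH_25_NDD}). Your element-wise integration-by-parts argument for commutativity and the Bramble--Hilbert/scaling step are exactly the standard machinery behind that cited result, so you have merely filled in details the paper delegates to the reference.
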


\begin{proof}
The error estimate for $\|\divDiv(\bM-\bM_h)\|$ is due to the commutativity
property. For $d=2$, this and the error estimate for $\|\bM-\bM_h\|$ are shown
by \cite[Proposition~10]{FuehrerH_25_MKL}. The proof for $d=3$ is analogous.
\end{proof}

Our discrete scheme reads as follows.
\emph{Find $\bM_h\in \X(\mesh)$ such that}
\begin{subequations} \label{probMh}
\begin{align}
   \dual{\trdD{}(\bM_h)}{\dpsi}&=\dual{\trdD{}(\Pi^\dDiv_\mesh\bG)}{\dpsi}
   \quad\forall\dpsi\in H^2_D(\Omega), \label{probMh_bc}\\
   \ip{\bM_h}{\dbM}_\dDiv
   &=  \frac Bm \vdual{f}{\divDiv\dbM + q^2\bT\colon\dbM}
   - B\dual{\trGg{}(g)}{\dbM}\quad\forall\dbM\in \X_N(\mesh).\\
\intertext{\emph{Then set}}
   \wtilde u &:= \frac 1m f-\frac Bm\Bigl(\divDiv\bM_h + q^2\bT\colon\bM_h\Bigr)
   \quad\text{\emph{and}}\quad
   u_h       := \Pi^1_\mesh \wtilde u.
\end{align}
\end{subequations}

\begin{remark} \label{rem_mixed}
(i) Relation \eqref{probMh_bc} uses test functions $\dpsi\in H^2_D(\Omega)$.
Therefore, boundary trace components of $\bM_h$ coincide with those of $\Pi^\dDiv_\mesh\bG$,
thus approximating the (now essential) boundary condition for $\bM$ in \eqref{Mphiu4}.\\
(ii)
In general $\wtilde u$ is not a piecewise polynomial due to the presence of $f$ and
$\bT$. Furthermore, $\bM_h\in P^3(\mesh;\SS)$ and $\divDiv\bM_h\in P^1(\mesh)$ by \eqref{dDivlin}.
The pair $(\bM_h,u_h)$ can be interpreted as the solution to a discrete mixed scheme of
\eqref{min} with approximation spaces $\X(\mesh)\subset H(\dDiv,\Omega;\SS)$ and
$P^1(\mesh)\subset L_2(\Omega)$.
\end{remark}

\begin{theorem} \label{thm_disc_lin}
The solutions $(\bM,u)$ and $(\bM_h,\widetilde u,u_h)$ of \eqref{probM} and \eqref{probMh},
respectively, satisfy
\[
   \sqrt{m} \|u-\wtilde u\| \le
   \|\bM-\bM_h\|_\dDiv \le \min \{\|\bM-\bQ\|_\dDiv;\; \bQ\in\X(\mesh)\}
\]
and
\[
   \sqrt{m} \|u-u_h\|
   \le \sqrt{m} \|u-\Pi^1_\mesh u\| + \min \{\|\bM-\bQ\|_\dDiv;\; \bQ\in\X(\mesh)\}.
\]
Assuming that $u\in H^s(\Omega)$, $\bM\in H^r(\Omega;\SS)$, and $\divDiv\bM\in H^t(\Omega)$
with $s,t\in (0,2]$ and \mbox{$r\in (3/2,2]$}, estimates
\begin{align*}
   \|\bM-\bM_h\|_\dDiv
   &\le C \|h_\mesh\|_\infty^{\min\{r,t\}}
       \Bigl(\sqrt{B}(1+\sqrt{\frac Bm}q^2)\|\bM\|_r + \frac{B}{\sqrt{m}} \|\divDiv\bM\|_t\Bigr),\\
   \|u-u_h\| &\le
   C \|h_\mesh\|_\infty^s \|u\|_s +
   C \|h_\mesh\|_\infty^{\min\{r,t\}}
   \Bigl(\sqrt{\frac Bm}(1+\sqrt{\frac Bm}q^2)\|\bM\|_r + \frac{B}{m} \|\divDiv\bM\|_t\Bigr)
\end{align*}
hold with a constant $C>0$ that is independent of $B$, $q$, $m$, and $\mesh$
(under the shape-regularity assumption).
\end{theorem}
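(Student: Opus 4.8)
The plan is to establish the three displayed bounds in turn, with the middle best-approximation estimate for $\bM-\bM_h$ as the core and the other two reduced to it. I would begin with the density bound by comparing $u$ from \eqref{u} with $\wtilde u$ from \eqref{probMh}: their defining formulas differ only through $\bM$ versus $\bM_h$, so $u-\wtilde u=-\tfrac Bm\bigl(\divDiv(\bM-\bM_h)+q^2\bT\colon(\bM-\bM_h)\bigr)$. Since the second term of $\norm{\cdot}_\dDiv^2$ is precisely $\tfrac{B^2}m\norm{\divDiv(\cdot)+q^2\bT\colon(\cdot)}^2$, discarding the nonnegative $L_2$-contribution gives $\tfrac B{\sqrt m}\norm{\divDiv(\bM-\bM_h)+q^2\bT\colon(\bM-\bM_h)}\le\norm{\bM-\bM_h}_\dDiv$, which is exactly $\sqrt m\,\norm{u-\wtilde u}\le\norm{\bM-\bM_h}_\dDiv$. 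The $u_h$ estimate then follows from $\norm{u-u_h}\le\norm{u-\Pi^1_\mesh u}+\norm{\Pi^1_\mesh(u-\wtilde u)}$ together with the $L_2$-stability of $\Pi^1_\mesh$ and the bound just obtained.

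\emph{Best approximation.} Subtracting the discrete variational equation in \eqref{probMh} from \eqref{probM1} — they share the same right-hand side and $\X_N(\mesh)\subset H_N(\dDiv,\Omega;\SS)$ — yields the Galerkin orthogonality $\ip{\bM-\bM_h}{\dbM}_\dDiv=0$ for all $\dbM\in\X_N(\mesh)$. The discrete boundary equation \eqref{probMh_bc} places $\bM_h$ in the affine set $\Pi^\dDiv_\mesh\bG+\X_N(\mesh)$, so $\bM_h$ is the $\ip{\cdot}{\cdot}_\dDiv$-projection of $\bM$ onto that set; Pythagoras then gives $\norm{\bM-\bM_h}_\dDiv\le\norm{\bM-\bQ}_\dDiv$ for every admissible $\bQ$, i.e.\ every $\bQ\in\X(\mesh)$ whose $\trdD{}$-trace equals that of $\Pi^\dDiv_\mesh\bG$.

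\emph{Boundary consistency (the delicate step).} The crux is that the interpolant $\Pi^\dDiv_\mesh\bM$ is admissible, so that it realizes the minimum in the displayed estimate. Here the choice $\Pi^\dDiv_\mesh\bG$ in \eqref{probMh_bc} is essential: since $\Pi^\dDiv_\mesh$ reproduces the degrees of freedom \eqref{dofs}, the traces of $\Pi^\dDiv_\mesh\bM$ and $\Pi^\dDiv_\mesh\bG$ are fixed by those degrees of freedom, and the natural boundary conditions in \eqref{bc} (prescribing $\nDiv$ on $\G{sc}\cup\G{f}$, $\bn\cdot\bM\bn$ on $\G{ss}\cup\G{f}$, and the tangential-normal jumps at the edges of $J_N$) force the corresponding free components of $\bM$ and $\bG$ to coincide. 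Hence $\Pi^\dDiv_\mesh\bM-\Pi^\dDiv_\mesh\bG\in\X_N(\mesh)$, i.e.\ $\Pi^\dDiv_\mesh\bM$ is admissible, and $\norm{\bM-\bM_h}_\dDiv\le\norm{\bM-\Pi^\dDiv_\mesh\bM}_\dDiv$. I expect this trace matching — confirming that the discrete boundary data cancel the boundary consistency error exactly, leaving no extra term — to be the only genuinely delicate part; the rest is bookkeeping.

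\emph{Rates.} Finally I would bound $\norm{\bM-\Pi^\dDiv_\mesh\bM}_\dDiv$ using Lemma~\ref{la_PidDiv}. Splitting $\norm{\cdot}_\dDiv^2$ into its two weighted pieces and estimating $\norm{\divDiv(\bM-\Pi^\dDiv_\mesh\bM)+q^2\bT\colon(\bM-\Pi^\dDiv_\mesh\bM)}$ by the triangle inequality and the pointwise bound $\abs{\bT\colon\bA}\le\abs{\bA}$ (valid since $\bT=\bnu\bnu^\top$ with $\abs{\bnu}=1$), the two interpolation estimates of Lemma~\ref{la_PidDiv} with exponents $\min\{r,2\}=r$ and $\min\{t,2\}=t$ assemble the $\sqrt B$, $\sqrt{B/m}\,q^2$, and $B/\sqrt m$ weights exactly as stated, after bounding $\norm{h_\mesh}_\infty^r$ and $\norm{h_\mesh}_\infty^t$ by $\norm{h_\mesh}_\infty^{\min\{r,t\}}$. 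The bound for $\norm{u-u_h}$ then combines this with the standard estimate $\norm{u-\Pi^1_\mesh u}\le C\norm{h_\mesh}_\infty^s\norm{u}_s$ and a division by $\sqrt m$.
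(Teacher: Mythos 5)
Your proof is correct and follows essentially the same route as the paper: the projection (C\'ea) property of $\bM_h$ gives the best-approximation bound, the postprocessing identities for $\wtilde u$ and $u_h=\Pi^1_\mesh\wtilde u$ together with $L_2$-stability of $\Pi^1_\mesh$ give the two density estimates, and Lemma~\ref{la_PidDiv} yields the rates. Your ``boundary consistency'' step --- verifying via the degrees of freedom \eqref{dofs} that $\Pi^\dDiv_\mesh\bM-\Pi^\dDiv_\mesh\bG\in\X_N(\mesh)$, so the interpolant is admissible and the Pythagoras bound over the affine admissible set suffices for the rate estimates --- is exactly what the paper's one-line appeal to ``the definition of $\bM_h$ as a projection'' leaves implicit, so you have merely spelled out the same argument in more detail.
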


\begin{proof}
The bound for $\|u-\wtilde u\|$ is due to relation
\[
   u-\wtilde u = \frac Bm \Bigl(\divDiv (\bM_h-\bM) + q^2\bT\colon (\bM_h-\bM)\Bigr)
\]
and the definition of $\|\cdot\|_\dDiv$.
The definition of $\bM_h$ as a projection implies the first bound for $\|\bM-\bM_h\|_\dDiv$.
We use \eqref{dDivlin} to calculate
\begin{align*}
   \sqrt{m} \|\Pi^1_\mesh u-u_h\|
   &\le \frac B{\sqrt{m}}\|\Pi^1_\mesh(\divDiv\bM+q^2\bT\colon\bM-\divDiv\bM_h-q^2\bT\colon\bM_h)\|
   \le \|\bM-\bM_h\|_\dDiv.
\end{align*}
An application of the triangle inequality yields
\begin{align*}
   \sqrt{m} \|u-u_h\| &\le \sqrt{m} \|u-\Pi^1_\mesh u\| + \|\bM-\bM_h\|_\dDiv.
\end{align*}
This proves the first bound for $\|u-u_h\|$.
The approximation orders are due to the properties of $\Pi^1_\mesh$
and $\Pi^\dDiv_\mesh$, cf.~Lemma~\ref{la_PidDiv}.
\end{proof}

\subsection{Nonlinear problem: Uzawa-type algorithm} \label{sec_disc_nlin}

We present a discretization of the nonlinear variational formulation \eqref{Mphiu}
and propose an Uzawa-type algorithm for its solution.

For ease of presentation we consider homogeneous essential boundary conditions for
$\bM$, i.e., $\bG=0$, and recall that $\eta\in H^1(\Omega)$ represents the Dirichlet
condition for $\phi$.
As in the linear case we use the discrete space $\X_N(\mesh)$ to approximate
$\bM$ and define an approximation of $u$ by postprocessing.
Function $\phi$ is approximated by standard continuous piecewise polynomials.

A discretization of \eqref{Mphiu} then reads:
\emph{Find $(\bM_h,\phi_h) \in \X_N(\mesh)\times \bigl(P^p(\mesh)\cap H^1(\Omega)\bigr)$ such that}
\begin{subequations}\label{Mphiudisc}
\begin{align}
   \label{Mphiu1disc}
   &\ip{\bM_h}{\dbM}_{\dDiv,\phi_h}
   = \frac{B}m \vdual{f}{\divDiv\dbM + q^2\bT(\phi_h)\colon\dbM}
   - B\dual{\trGg{}(g)}{\dbM},\\
   \label{Mphiu3disc}
   &K \vdual{\grad\phi_h}{\grad\dphi} + Bq^2\vdual{\bM_h\colon \bT'(\phi_h)u_h}{\dphi}
   = 0,\\
   \label{Mphiu4disc}
   &\phi_h|_\Gamma = \eta_h 
  \end{align}
\emph{for any $(\dbM,\dphi)\in \X_N(\mesh)\times \bigl(P^p(\mesh)\cap H^1_0(\Omega)\bigr)$ where}
\begin{align} \label{Mphiu5disc}
  u_h = u_h(\bM_h,\phi_h)
      = \Pi_\mesh^1\bigl(\frac1m f-\frac{B}m(\divDiv\bM_h + q^2\bT(\phi_h)\colon\bM_h)\bigr).
\end{align}
\end{subequations}
Here, $\eta_h\in \bigl(P^p(\mesh)\cap H^1(\Omega)\bigr)|_\Gamma$
is a quasi-interpolation of $\eta|_\Gamma$.
In our numerical experiments we use the nodal interpolation or the $L^2(\Gamma)$ projection.
As we expect the convergence $\|\phi-\phi_h\|_{H^1(\Omega)} = \cO(h^{p})$
for sufficiently smooth solution, we choose $p=2$ to be comparable with
the linear case.

Our algorithm solves \eqref{Mphiu1disc} for given $\phi_h$,
followed by an update of $\phi_h$ by solving~\eqref{Mphiu3disc}
for given $\bM_h$ from the previous step and $u_h$ defined by \eqref{Mphiu5disc}.
With the abbreviation
\[
   F_{\phi_h}(\dbM) := \frac{B}m \vdual{f}{\divDiv\dbM + q^2\bT(\phi_h)\colon\dbM}
\]
it reads as follows.

\begin{algorithm}[h]
	\SetKwData{Left}{left}
	\SetKwData{This}{this}
	\SetKwData{Up}{up}
	\SetKwFunction{Union}{Union}
	\SetKwFunction{FindCompress}{FindCompress}
	\SetKwInOut{Input}{Input}
	\SetKwInOut{Output}{Output}
  \Input{Parameters $\alpha>0$, $\tau_{\bM}>0$, $\tau_\phi>0$ and initial guess $\phi_h$ with $\phi_h|_\Gamma = \eta_h$}
  \Output{Approximations $u_h\approx u$, $\bM_h\approx \bM$, $\phi_h\approx \phi$}
	\BlankLine
  \Repeat{$\mathrm{res}_{\bM}<\tau_{\bM}$}{
      Solve $\bM_h\in\X_N(\mesh)$: $\ip{\bM_h}{\dbM}_{\dDiv,\phi_h}
      = F_{\phi_h}(\dbM)$ $\forall\dbM\in \X_N(\mesh)$\;
      Compute $u_h := \Pi_h^1\Big(\frac1m f-\frac{B}m(\divDiv\bM_h + q^2\bT(\phi_h)\colon\bM_h)\Big)$\;
      \Repeat{$\mathrm{res}_{\phi}<\tau_{\phi}$}{
        Solve $w\in H^1_0(\Omega)\cap P^p(\mesh)$:
        $K \vdual{\nabla w}{\nabla\dphi}
        = K \vdual{\nabla \phi_h}{\nabla\dphi}+Bq^2\vdual{\bM_h\colon\bT'(\phi_h)u_h}{\dphi}$
        $\forall\dphi\in H^1_0(\Omega)\cap P^p(\mesh)$\;
        Update $\phi_h \leftarrow \phi_h -\alpha w$\;
        Compute residual $\mathrm{res}_\phi = \sqrt{K} \|\nabla w\|$\;
      }
      Solve $\bN\in \X_N(\mesh)$:
      $\ip{\bN}{\dbM}_{\dDiv,\phi_h}
      = \ip{\bM_h}{\dbM}_{\dDiv,\phi_h}-F_{\phi_h}(\dbM)$ $\forall\dbM\in \X_N(\mesh)$\;
      Compute residual $\mathrm{res}_{\bM} = \norm{\bN}_{\dDiv,\phi_h}$\;
    }
	\caption{Uzawa-type algorithm for solving~\eqref{Mphiudisc}.}\label{alg:Uzawa}
\end{algorithm}


\def\err{\mathrm{err}}
\def\errMLtwo{\err_{L^2(\Omega)}}
\def\errDDivM{\err_{\dDiv}}

\section{Numerical experiments}\label{sec_numeric}

\begin{minipage}{0.7\textwidth}
We present several numerical experiments that illustrate the performance of
our methods for linear and nonlinear cases in two space dimensions.
Throughout, we select $m=1$, $\Omega=(0,1)^2$, and use the initial triangulation
depicted on the right.

Starting from the initial triangulation we generate a sequence of quasi-uniform regular meshes
by dividing each triangle into four triangles of the same area using the (iterated)
newest vertex bisection algorithm. We note that for all meshes $\mesh$, $\diam(\el)$ and
$(\#\mesh)^{-1/2}$ are proportional to a mesh parameter $h$ for all $\el\in\mesh$.\\
\end{minipage}
\qquad
\begin{minipage}{0.3\textwidth}
\scalebox{0.45}{\setlength{\unitlength}{4144sp}%
\begingroup\makeatletter\ifx\SetFigFont\undefined%
\gdef\SetFigFont#1#2#3#4#5{%
  \reset@font\fontsize{#1}{#2pt}%
  \fontfamily{#3}\fontseries{#4}\fontshape{#5}%
  \selectfont}%
\fi\endgroup%
\begin{picture}(3666,3666)(4468,-7294)
\thicklines
{\color[rgb]{0,0,0}\put(4501,-7261){\framebox(3600,3600){}}
}%
{\color[rgb]{0,0,0}\put(4501,-7261){\line( 1, 1){3600}}
}%
{\color[rgb]{0,0,0}\put(4501,-3661){\line( 1,-1){3600}}
}%
\end{picture}%
}
\end{minipage}


In cases where the exact solution is known we provide results for the individual terms of
the energy norm (with $m=1$), 
\begin{align*}
  \errMLtwo(\bM_h) &:= \sqrt{B}\|\bM-\bM_h\|, \\
  \errDDivM(\bM_h) &:= B\|\divDiv(\bM-\bM_h)+q^2\bT\colon(\bM-\bM_h)\|
\end{align*}
as well as the error $\|u-u_h\|$ of the postprocessed density variation $u_h$.

\subsection{Linear problem with manufactured solution}\label{sec_numeric_linKnown}
We consider the manufactured functions
\begin{align*}
  \bnu(x,y) &= \begin{pmatrix}
    \cos(\tfrac\pi2(y-\tfrac12)) \\
    \sin(\tfrac\pi2(y-\tfrac12))
  \end{pmatrix}, \quad
  u(x,y) = \sin(q[(x,y)\cdot\bnu(x,y)]), \quad
  \bT(x,y) = \bnu(x,y)\bnu(x,y)^\top
\end{align*}
for all $(x,y)\in \Omega$. Given $q>0$ we set $B = 1/q^4$ and
compute load $f$ through~\eqref{eq_u} where $\bM$ is defined as in~\eqref{M}. 
We prescribe the values of $u$ and $\nabla u$ on $\Gamma_{hc}:=\Gamma$
(fully hard-clamped case in~\eqref{bc}).
Figure~\ref{fig:ExampleLinErrors} shows the error contributions in the energy norm and
the $L^2(\Omega)$ error of the postprocessed solution for $q=1,20,40,60$.
We observe asymptotically optimal rates $\cO(h^2)$, though larger values of $q$
cause longer pre-asymptotic phases. This is particularly evident for $q=40$ and $q=60$.
The convergence behavior complies with the observations of Theorem~\ref{thm_disc_lin}.

\begin{figure}
  \begin{center}
    \includegraphics[width=\textwidth]{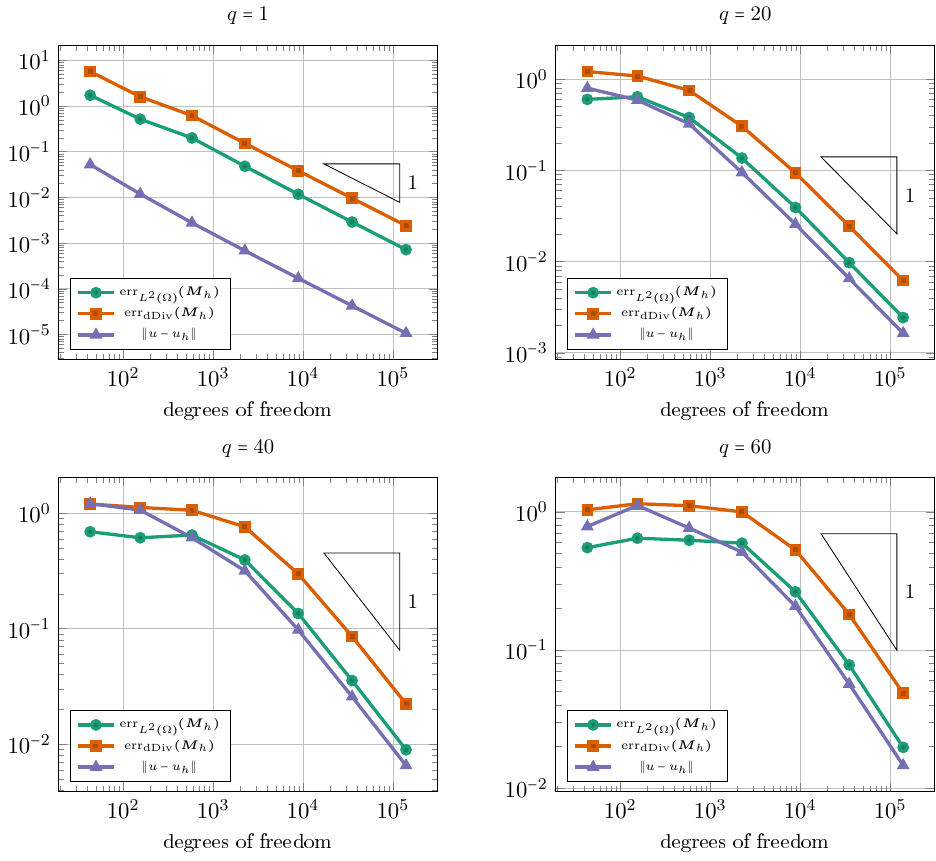}
  \end{center}
  \caption{Error contributions for the linear problem from Section~\ref{sec_numeric_linKnown} for $q=1,20,40,60$.}
  \label{fig:ExampleLinErrors}
\end{figure}

\subsection{Linear problem with unknown solution}\label{sec_numeric_linUnknown}
We choose $f(x,y) = 1$ for $(x,y)\in\Omega$ and
set $\bT = \bnu\bnu^\top$ for three different choices of $\bnu$:
\begin{align*}
  \bnu^{(1)}(x,y) &= \begin{pmatrix}
    \cos(\tfrac\pi2(y-\tfrac12)) \\
    \sin(\tfrac\pi2(y-\tfrac12))
  \end{pmatrix}, \\
  \bnu^{(2)}(x,y) &= 
  \begin{cases}
    \begin{pmatrix}
      \cos(\tfrac\pi2(y-\tfrac12)) \\
      \sin(\tfrac\pi2(y-\tfrac12))
    \end{pmatrix}
    & x>\tfrac12, \\
    \begin{pmatrix}
      0 \\
      -1
    \end{pmatrix}
    & x<\tfrac12, y<\tfrac12, \\
    \begin{pmatrix}
      0 \\
      1
    \end{pmatrix}
    & x<\tfrac12, y>\tfrac12,
  \end{cases}
  \\
  \bnu^{(3)}(x,y) &= \begin{pmatrix}
    \frac{y-\tfrac12}{|(x-\tfrac14,y-\tfrac12)|^2} - \frac{y-\tfrac12}{|(x-\tfrac34,y-\tfrac12)|^2}\\
    \frac{x-\tfrac34}{|(x-\tfrac34,y-\tfrac12)|^2} - \frac{x-\tfrac14}{|(x-\tfrac14,y-\tfrac12)|^2} 
  \end{pmatrix}
    C(x,y).
\end{align*}
Here, $C(x,y)$ is chosen such that $|\bnu^{(3)}|=1$ a.e. in $\Omega$.
Note that $\bnu^{(1)}$ corresponds to the vector field considered in Section~\ref{sec_numeric_linKnown} and is continuous whereas $\bnu^{(2)}$ and $\bnu^{(3)}$ are discontinuous vector fields.
Field $\bnu^{(3)}$ represents a rotated and normalized electric dipole. 
The three fields are visualized in the left columnn of Figure~\ref{fig:ExampleLinVD}.

For the computations we choose $B = 10^{-5}$ and $q=40$. This choice of parameters is motivated by the setup presented in~\cite[Fig.~6]{PevnyiSS_14_MSL}.
Furthermore, we use free boundary conditions for $\bM$,
i.e., $\Gamma_f := \Gamma$ and $\bG=0$ in~\eqref{bc}.
The right column of Figure~\ref{fig:ExampleLinVD} visualizes the postprocessed solution $u_h$ on a mesh with $16384$ elements for the three fields $\bnu^{(j)}$, $j=1,2,3$ (top to bottom). 
Note that these plots correspond to the density variation in liquid crystal simulations.
Comparing the plots in each row we see that the waves seem to align with the director field
as is observed in other liquid crystal simulations with similar energy functional,
cf.~\cite{PevnyiSS_14_MSL}.

\begin{figure}
  \begin{center}
    \includegraphics[height=0.9\textheight]{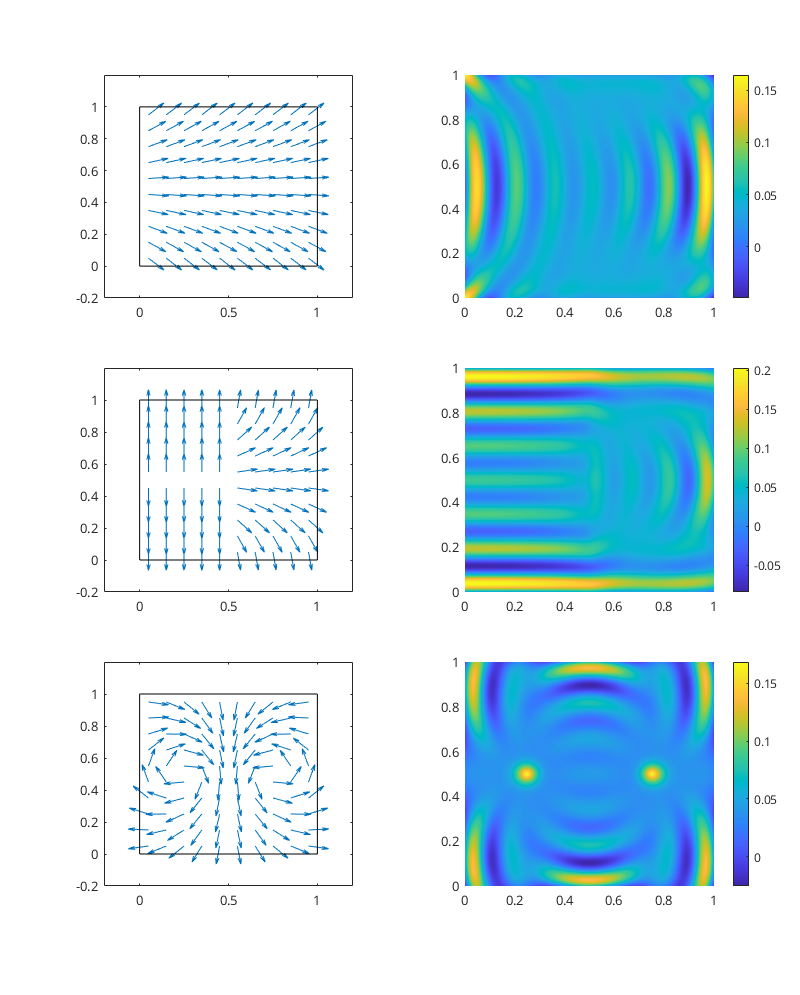}
  \end{center}
  \caption{Vectors $\bnu^{(1)}$, $\bnu^{(2)}$, $\bnu^{(3)}$ (left column, top to bottom) with corresponding approximations $u_h$ (right column) for the linear problem from Section~\ref{sec_numeric_linUnknown}.}\label{fig:ExampleLinVD}
\end{figure}

\begin{figure}
  \begin{center}
    \includegraphics[width=0.5\textwidth]{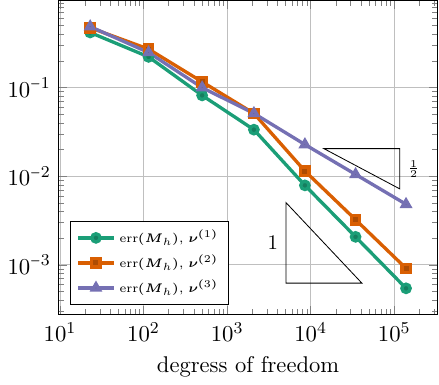}
  \end{center}
  \caption{Experimental orders of convergence for the setup of Section~\ref{sec_numeric_linUnknown} using the vector fields $\bnu^{(j)}$, $j=1,2,3$.}
  \label{fig:ExampleLinUnknownErrors}
\end{figure}

Since the exact solution is not known we cannot report exact errors $\norm{\bM-\bM_h}_{\dDiv}$.
Instead, we exploit Galerkin orthogonality
\begin{align*}
  \norm{\bM-\bM_h}_{\dDiv}^2 = \norm{\bM}_{\dDiv}^2-\norm{\bM_h}_{\dDiv}^2
\end{align*}
and estimate $\norm{\bM}_{\dDiv}$ by extrapolating the values of $\norm{\bM_h}_{\dDiv}$ using Aitken's $\Delta^2$ method.
Let us call this value $E^\star$.
Figure~\ref{fig:ExampleLinUnknownErrors} shows the experimental errors
\begin{align*}
  \err(\bM_h):= \sqrt{(E^\star)^2-\norm{\bM_h}_\dDiv^2}.
\end{align*}
For vector fields $\bnu^{(1)}$ and $\bnu^{(2)}$ we obtain optimal convergence rates
$\err(\bM_h)=\cO(h^2)$ whereas for $\bnu^{(3)}$, convergence is reduced to $\cO(h)$.
This indicates an exact solution with reduced regularity.

\subsection{Nonlinear problem with known solution}
We consider the manufactured solution
\begin{align*}
  \phi(x,y) &= -\frac\pi4 + \frac\pi2 y^3, \quad
  u(x,y) = \sin(q(x-\tfrac12,y-\tfrac12)^\top\cdot\bnu) \quad \text{for }(x,y)\in \Omega
\end{align*}
with $\bnu = (\cos(\phi),\sin(\phi))^\top$.
Given $q>0$ we set $B = 1/q^4$, choose $K=1$,
and compute load $f$ through~\eqref{eq_u} where $\bM$ is defined as in~\eqref{M}. 
We prescribe the values of $u$ and $\nabla u$ on $\Gamma_{hc}:=\Gamma$
(fully hard-clamped case in~\eqref{bc}).
Since $\phi$ does not satisfy $-\Delta \phi + Bq^2\bT\colon \bM u = 0$
we introduce the additional source term
$f_\phi := -\Delta \phi + Bq^2\bT\colon \bM u$ in~\eqref{Mphiu3}
and adapt the discrete scheme accordingly.

We run Algorithm~\ref{alg:Uzawa} with $\alpha = 0.5$ and $\tau_{\bM} = \tau_\phi = 10^{-6}$.

\begin{figure}
  \begin{center}
    \includegraphics[width=\textwidth]{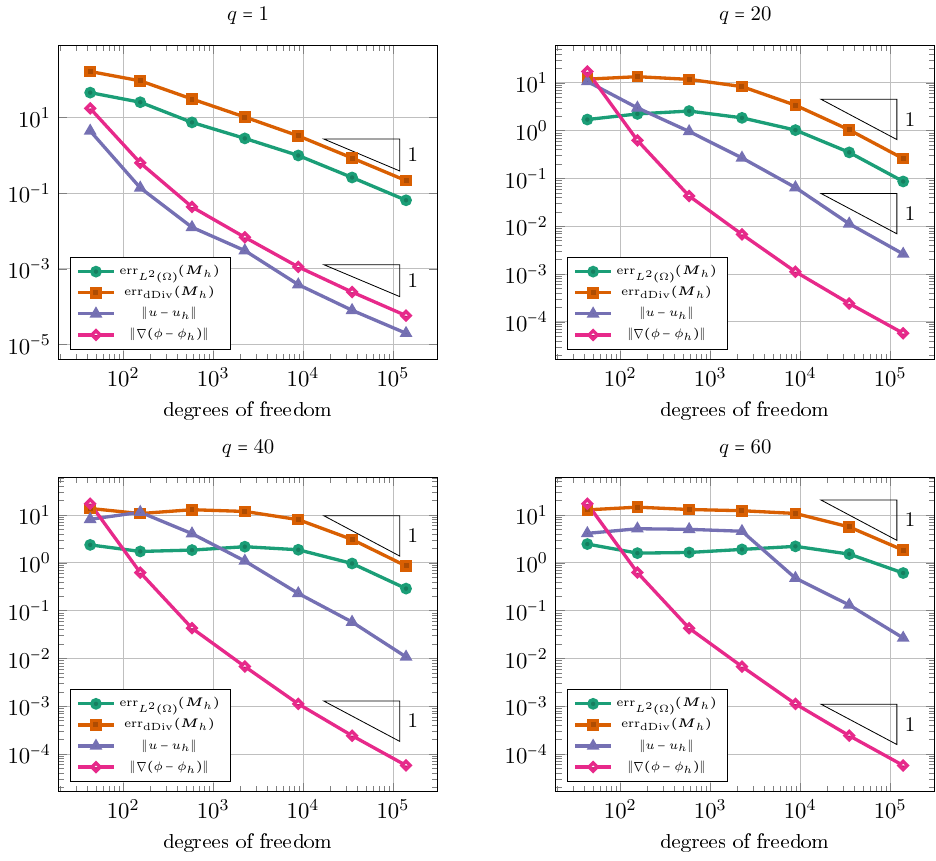}
  \end{center}
  \caption{Error contributions for the nonlinear problem with known solution for $q=1,20,40,60$.}
  \label{fig:ExampleNonLinErrors}
\end{figure}

Figure~\ref{fig:ExampleNonLinErrors} displays the error contributions for different values of $q$. 
We observe optimal convergence orders but, as in the linear case, larger values of $q$ lead to a longer preasymptotic phase. 
This is particularly observed for $q=40$ and $q = 60$.
Table~\ref{tab:nonLinExact} lists the iterations numbers for the exterior and interior loops
of the Uzawa-algorithm~\ref{alg:Uzawa}. We observe a decrease of inner iterations during
the outer loops, whereas the table presents their averages. In the case of $q=1$ and the two
final meshes, the outer loop converged to a residual of about $2\cdot 10^{-6}$ within
$25$ iterations which did not decrease afterwards.
We attribute this to the stronger nonlinear perturbations for smaller $q$, cf.~\eqref{Mphiu}.
Note that the relevant factor is $Bq^2=q^{-2}$ in this example.
For more practical values of $q$, the iteration
numbers are generally moderate with little variation.

\begin{table}[htbp]
  \begin{center}
  {\small
  \begin{tabular}{|c|cc|cc|cc|cc|}
    \hline
    \multicolumn{1}{|c|}{} &\multicolumn{2}{|c|}{$q=1$} &\multicolumn{2}{|c|}{$q=20$} &\multicolumn{2}{|c|}{$q=40$} &\multicolumn{2}{|c|}{$q=60$}\\
     \cline{2-9}
$\#\mesh$ &  outer & inner & outer & inner & outer & inner & outer & inner \\\hline
16    & 7  & 12.28 & 4 & 12.25 & 5 & 12.00 & 6 & 11.00 \\\hline
64    & 5  & 10.40 & 6 & 9.83  & 4 & 12.00 & 4 & 12.50 \\\hline
256   & 3  & 10.66 & 5 & 9.00  & 5 & 9.00  & 4 & 10.00 \\\hline
1024  & 3  & 9.33  & 4 & 8.50  & 4 & 9.50  & 4 & 10.50 \\\hline
4096  & 25 & 1.84  & 3 & 9.66  & 3 & 9.33  & 4 & 8.25 \\\hline
16384 & 25 & 1.72  & 3 & 8.00  & 3 & 8.33  & 3 & 8.66 \\\hline
  \end{tabular}
}
\end{center}
\caption{Outer and inner iterations in the Uzawa-type algorithm for the problem with known solution.
The columns labeled \emph{inner} show the average number of inner iterations.
For $q=1$ and $\#\mesh=4096$ resp. $\#\mesh=16384$ the outer loop did not converge to the prescribed tolerance within $25$ outer.}
  \label{tab:nonLinExact}
\end{table}

\subsection{Nonlinear problem with unknown solution}\label{sec_numeric_nonlinUnknown}
As in the previous section we select $\alpha=0.5$ and $\tau_{\bM} = \tau_\phi = 10^{-6}$
in Algorithm~\ref{alg:Uzawa}.
We choose $q=40$, $B = 10^{-5}$, $K=1$, $\bnu = (\cos(\phi),\sin(\phi))^\top$, and
$f(x,y)=1$ for $(x,y)\in\Omega$. We select homogeneous free boundary conditions for $\bM$
everywhere, $\Gamma_f :=\Gamma$ and $\bG:=0$ in \eqref{bc}, and
consider three Dirichlet choices for $\phi$: $\eta = \eta_j$, $j=1,2,3$, with
\begin{align*}
  \eta_1(x,y) &:= -\frac\pi4+\frac\pi2 y, \\
  \eta_2(x,y) &:= \frac\pi2\sin(2\pi(y-1/2)), \\
  \eta_3(x,y) &:= \begin{cases}
    -\tfrac\pi2+\pi y & x>\tfrac12, \\
    \tfrac\pi2 & x<\tfrac12, y>\tfrac12, \\
    -\tfrac\pi2 & x<\tfrac12, y<\tfrac12.
    \end{cases}
\end{align*}
Note that $\eta_3$ has a finite jump at $(x,y) = (0,\tfrac12)$ and is therefore not
an $H^1(\Omega)$ trace.
For the computations we replace data $\eta_j|_\Gamma$ by their
$L^2(\Gamma)$-projections onto $\bigl(P^2(\mesh)\cap H^1(\Omega)\bigr)|_\Gamma$.

In these cases the exact solution is unknown.
Furthermore, the inner product $\ip\cdot\cdot_{\dDiv,\phi_h}$ depends
on the approximation $\phi_h$ and is expected to change in each step.
Therefore, we cannot use Galerkin orthogonality as in the linear case to estimate the error. 
Instead, we consider the discrete energy
\begin{align*}
  J(\bM_h,\phi_h;f) := \frac{B}2\|\bM_h\|^2 + \frac{m}2\|u_h\|^2 + \frac{K}2\|\nabla\phi_h\|^2 - \vdual{f}{u_h}
\end{align*}
and, again employ Aitken's $\Delta^2$ method to extrapolate it to a value $J^\star$. 
We then report on the error quantity
\begin{align*}
  \err(\bM_h,\phi_h) := \sqrt{|J^\star-J(\bM_h,\phi_h;f)|}.
\end{align*}
Figure~\ref{fig:ExampleNonLinUnknownErrors} shows these errors for the Dirichlet boundary values $\eta_1$ and $\eta_2$.
We observe optimal convergence $\cO(h^2)$ in both cases. 
We do not show the energy error in the case of $\eta_3$. Recall that $\eta_3$ is discontinuous
on the boundary and one expects that $\|\nabla \phi_h\|\to \infty$ for $h\to 0$,
thus $J(\bM_h,\phi_h;f)\to \infty$ for $h\to 0$. 
In numerical experiments (not reported) we observe that the discrete energy
is indeed increasing on a sequence of quasi-uniform meshes.
The discontinuity of $\phi$ at $(0,1/2)$ is confirmed by the approximation $\phi_h$
on a mesh with $16384$ elements, see Figure~\ref{fig:discontSol}.

\begin{figure}[!htb]
  \begin{center}
    \includegraphics[width=0.5\textwidth]{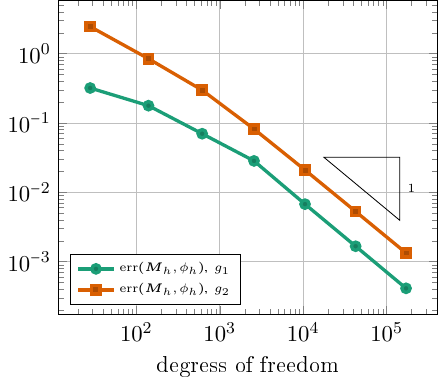}
  \end{center}
  \caption{Experimental orders of convergence for the setup of Section~\ref{sec_numeric_nonlinUnknown} using the Dirichlet data $\eta_j$, $j=1,2$.}
  \label{fig:ExampleNonLinUnknownErrors}
\end{figure}

\begin{figure}[!htb]
  \begin{center}
    \includegraphics[height=0.3\textheight]{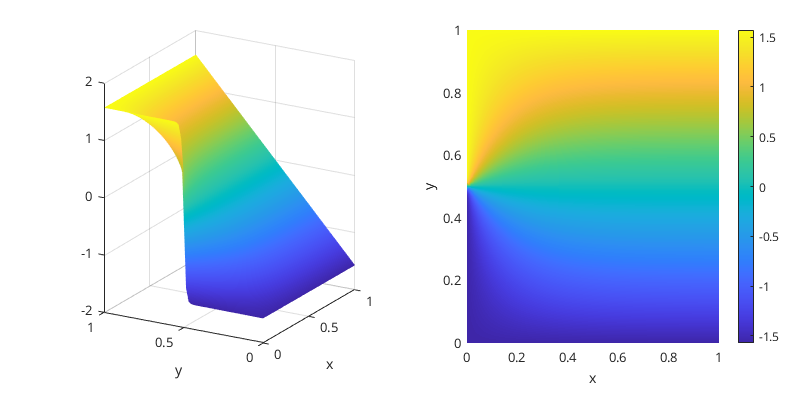}
  \end{center}
  \caption{Different views of $\phi_h$ for Dirichlet boundary values $\eta_3$ for the nonlinear problem with unknown solution.}\label{fig:discontSol}
\end{figure}

Finally, in Figure~\ref{fig:ExampleNonLinVD} we present the vectors $\bnu$ and the approximations of the density variation $u_h$ for the three different choices of boundary values $\eta_1,\eta_2,\eta_3$. 

\begin{figure}[!htb]
  \begin{center}
    \includegraphics[height=0.9\textheight]{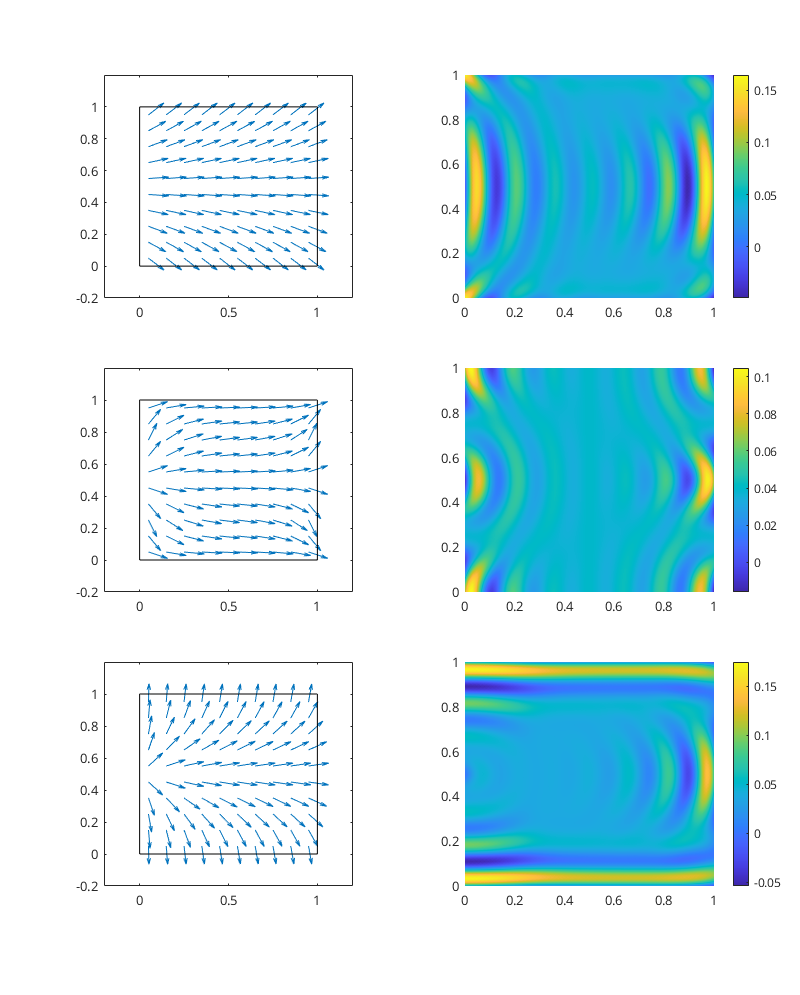}
  \end{center}
  \caption{Vectors and densities for the three different choices $\eta_1,\eta_2,\eta_3$ for the nonlinear problem with unknown solution.}\label{fig:ExampleNonLinVD}
\end{figure}

\FloatBarrier

\bibliographystyle{siam}


\begin{thebibliography}{10}

\bibitem{Adams}
{\sc R.~A. Adams}, {\em Sobolev spaces}, Academic Press, New York-London, 1975.
\newblock Pure and Applied Mathematics, Vol. 65.

\bibitem{AdlerAEML_15_EMF}
{\sc J.~H. Adler, T.~J. Atherton, D.~B. Emerson, and S.~P. MacLachlan}, {\em An
  energy-minimization finite-element approach for the {F}rank-{O}seen model of
  nematic liquid crystals}, SIAM J. Numer. Anal., 53 (2015), pp.~2226--2254.

\bibitem{BallB_15_DOP}
{\sc J.~M. Ball and S.~J. Bedford}, {\em Discontinuous order parameters in
  liquid crystal theories}, Molecular Crystals and Liquid Crystals, 612 (2015),
  pp.~1--23.

\bibitem{BorthagarayNW_20_SPF}
{\sc J.~P. Borthagaray, R.~H. Nochetto, and S.~W. Walker}, {\em A
  structure-preserving {FEM} for the uniaxially constrained {Q}-tensor model of
  nematic liquid crystals}, Numer. Math., 145 (2020), pp.~837--881.

\bibitem{ChenH_25_NDD}
{\sc L.~Chen and X.~Huang}, {\em A new div-div-conforming symmetric tensor
  finite element space with applications to the biharmonic equation}, Math.
  Comp., 94 (2025), pp.~33--72.

\bibitem{ChenYZ_17_SOL}
{\sc R.~Chen, X.~Yang, and H.~Zhang}, {\em Second order, linear, and
  unconditionally energy stable schemes for a hydrodynamic model of smectic-{A}
  liquid crystals}, SIAM J. Sci. Comput., 39 (2017), pp.~A2808--A2833.

\bibitem{FarrellHM_FED}
{\sc P.~E. Farrell, A.~Hamdan, and S.~P. MacLachlan}, {\em Finite-element
  discretization of the smectic density equation}, {arXiv}:2207.12916, 2022.

\bibitem{FuehrerH_25_MKL}
{\sc T.~F{\"u}hrer and N.~Heuer}, {\em Mixed finite elements for
  {Kirchhoff}--{Love} plate bending}, Math. Comp., 94 (2025), pp.~1065--1099.

\bibitem{FuehrerHN_19_UFK}
{\sc T.~F{\"u}hrer, N.~Heuer, and A.~H. Niemi}, {\em An ultraweak formulation
  of the {Kirchhoff}--{Love} plate bending model and {DPG} approximation},
  Math. Comp., 88 (2019), pp.~1587--1619.

\bibitem{FuehrerHS_20_UFR}
{\sc T.~F{\"u}hrer, N.~Heuer, and F.-J. Sayas}, {\em An ultraweak formulation
  of the {Reissner}--{Mindlin} plate bending model and {DPG} approximation},
  Numer. Math., 145 (2020), pp.~313--344.

\bibitem{DeGennesP_93}
{\sc P.~G.~D. Gennes and J.~Prost}, {\em The Physics of Liquid Crystals},
  Oxford University Press, 1993.

\bibitem{GuillenGT_15_ASA}
{\sc F.~Guill\'en-Gonz\'alez and G.~Tierra}, {\em Approximation of smectic-{A}
  liquid crystals}, Comput. Methods Appl. Mech. Engrg., 290 (2015),
  pp.~342--361.

\bibitem{GuillenGGS_13_LMF}
{\sc F.~M. Guill\'en-Gonz\'alez and J.~V. Guti\'errez-Santacreu}, {\em A linear
  mixed finite element scheme for a nematic {E}ricksen-{L}eslie liquid crystal
  model}, ESAIM Math. Model. Numer. Anal., 47 (2013), pp.~1433--1464.

\bibitem{LiuW_00_ALC}
{\sc C.~Liu and N.~J. Walkington}, {\em Approximation of liquid crystal flows},
  SIAM J. Numer. Anal., 37 (2000), pp.~725--741.

\bibitem{LiuW_02_MMA}
\leavevmode\vrule height 2pt depth -1.6pt width 23pt, {\em Mixed methods for
  the approximation of liquid crystal flows}, M2AN Math. Model. Numer. Anal.,
  36 (2002), pp.~205--222.

\bibitem{MaityMN_21_DGF}
{\sc R.~R. Maity, A.~Majumdar, and N.~Nataraj}, {\em Discontinuous {G}alerkin
  finite element methods for the {L}andau--de {G}ennes minimization problem of
  liquid crystals}, IMA J. Numer. Anal., 41 (2021), pp.~1130--1163.

\bibitem{MaityMN_23_PPE}
\leavevmode\vrule height 2pt depth -1.6pt width 23pt, {\em {\it {A} priori} and
  {\it a posteriori} error analysis for semilinear problems in liquid
  crystals}, ESAIM Math. Model. Numer. Anal., 57 (2023), pp.~3201--3250.

\bibitem{NochettoRY_22_GCP}
{\sc R.~H. Nochetto, M.~Ruggeri, and S.~Yang}, {\em Gamma-convergent
  projection-free finite element methods for nematic liquid crystals: the
  {E}ricksen model}, SIAM J. Numer. Anal., 60 (2022), pp.~856--887.

\bibitem{PevnyiSS_14_MSL}
{\sc M.~Y. Pevnyi, J.~V. Selinger, and T.~J. Sluckin}, {\em Modeling smectic
  layers in confined geometries: Order parameter and defects}, Phys. Rev. E, 90
  (2014), p.~032507.

\bibitem{Rindler_18_CV}
{\sc F.~Rindler}, {\em Calculus of variations}, Universitext, Springer, Cham,
  2018.

\bibitem{WangZZ_21_MCL}
{\sc W.~Wang, L.~Zhang, and P.~Zhang}, {\em Modelling and computation of liquid
  crystals}, Acta Numer., 30 (2021), pp.~765--851.

\bibitem{XiaF_23_VNA}
{\sc J.~Xia and P.~E. Farrell}, {\em Variational and numerical analysis of a
  {\bf {q}}-tensor model for smectic-{A} liquid crystals}, ESAIM Math. Model.
  Numer. Anal., 57 (2023), pp.~693--716.

\bibitem{XiaMAF_21_SLG}
{\sc J.~Xia, S.~MacLachlan, T.~J. Atherton, and P.~E. Farrell}, {\em Structural
  landscapes in geometrically frustrated smectics}, Phys. Rev. Lett., 126
  (2021), p.~177801.

\end{thebibliography}
\end{document}